\documentclass[a4paper,12pt,reqno]{amsart}
\usepackage{amssymb,amsfonts,amsthm,amsmath,url}
\usepackage[margin=0.9in]{geometry}
\usepackage{mathtools}
\usepackage{amscd}
\usepackage{setspace}
\usepackage{version}
\usepackage[pdftex,bookmarks=true,bookmarksnumbered,colorlinks=true]{hyperref}
\usepackage[hyperpageref]{backref}
\mathtoolsset{showonlyrefs}
\numberwithin{equation}{section}
\theoremstyle{plain}
\newtheorem{theorem}[subsection]{Theorem}

\newtheorem{lemma}[subsection]{Lemma}

\newtheorem{remark}[subsection]{Remark}
\newtheorem{definition}[subsection]{Definition}

\newcommand			{\md}		[1]	{\ensuremath{(\operatorname{mod}\, #1)}}
\newcommand			{\mdsub}		[1]	{\ensuremath{(\mbox{\scriptsize mod}\, #1)}}

\newcommand\N		{\mathbb{N}}
\newcommand\Z		{\mathbb{Z}}
\newcommand\R		{\mathbb{R}}
\newcommand\F		{\mathbb{F}}

\newcommand\p		{\mathfrak{p}}

\newcommand\Sym		{\operatorname{Sym}}
\newcommand\GL		{\operatorname{GL}}
\newcommand\ord		{\operatorname{ord}}
\newcommand\li		{\operatorname{li}}

\newcommand\mat		{\operatorname{Mat}}

\renewcommand{\leq}	{\leqslant}
\renewcommand{\geq}	{\geqslant}

\DeclareMathOperator*{\Osum}{\sum{}^*}

\makeatletter

\renewcommand{\theenumi}{(\roman{enumi})}

\renewcommand{\p@enumii}{\theenumi.}
\makeatother
\newtheoremstyle{named}{}{}{}{}{\bfseries}{.}{.5em}{\thmnote{#3's }#1}
\theoremstyle{named}

\newtheorem{ack}{Acknowledgements}

\begin{document}
\title[On the density of Singer cycles in $\GL_n(q)$]{On the distribution of the density of maximal order elements in general linear groups}
\author{S. Aivazidis}
\address{School of Mathematical Sciences, Queen Mary, University of London, London E1 4NS}
\email{s.aivazidis@qmul.ac.uk}
\author{E. Sofos}
\address{School of Mathematics, University of Bristol, Bristol BS8 1TW}
\email{efthymios.sofos@bristol.ac.uk}
\thanks{The first author acknowledges financial support from the N. D. Chrysovergis endowment under the auspices of the National Scholarships Foundation of Greece. The second author was supported by EPSRC grant EP/H005188/1.
}
\begin{abstract}
In this paper we consider the density of maximal order elements in $\GL_n(q)$. 
Fixing any of the rank $n$ of the group, the characteristic $p$ or the degree $r$ 
of the extension of the underlying field $\F_q$ of size $q=p^r$, we compute 
the expected value of the said density and establish that it follows a distribution law.

\end{abstract}
\subjclass[2010]{11N45 (20G40, 20P05)}

\maketitle

\section{Introduction}
In a series of papers stretching from 1965 to 1972, and beginning with \cite{erdos}, Erd\H{o}s and Tur\'{a}n examined in detail various questions of a statistical nature regarding the symmetric group. One might ask whether similar work can be done for other classes of groups; indeed, the most natural next candidate is the general linear group $\GL_n(q)$ of 
$n \times n$ matrices over the finite field $\F_q$. A notable difference between the two classes of groups is the dependence of the latter on more than one parameter. 
While $\Sym \left( \Omega \right)$ is completely determined (up to isomorphism) by the cardinality of the set $\Omega$, the groups $\GL_n(q)$ require 3 variables for their definition: the rank $n$ and the size $q$ of the underlying field, which, in turn, depends both on the characteristic of the field and on the degree of the extension. Stong~\cite{stong1} considered the average order of a matrix in $\GL_n(q)$ for fixed $q$ and varying $n$ and proved that 
\begin{equation}
\log \nu_n = n \log q - \log n + o\left( \log n \right),
\end{equation}
where
\begin{equation}
\nu_n = \frac{1}{\left| \GL_n(q) \right|}\sum\limits_{ A \in \GL_n(q)}\ord(A).
\end{equation}

Our purpose here is to address a question which is more limited in scope, but rather different in nature from Stong's investigations. In particular, we shall consider elements of maximal order in $\GL_n(q)$, also known as Singer cycles, and examine in detail the mean density of those elements in $\GL_n(q)$. Fixing any two of the parameters $n$, $p$, $r$ and letting the remaining one vary accordingly, we show that the density of Singer cycles follows a distribution law and provide its expected value.
It is straightforward to show that the maximal order of an element in $\mathrm{GL}_n(q)$ is $q^n-1$. In section~\ref{singers} we shall do this after establishing existence of the said elements and obtain the formula
\begin{equation}
\frac{\left| \GL_n(q) \right|}{q^n-1}  \frac{\phi \left( q^n-1\right)}{n}
\end{equation}
for the number of Singer cycles, where $\phi$ is the usual Euler function. The core of the paper is concerned with their density function 
\begin{equation}\label{densityf}
\p_n(q) \coloneqq \frac{1}{n}  \frac{\phi(q^n-1)}{q^n-1},
\end{equation}
i.e., the probability that an element has maximal order in $\GL_n(q)$.

We are interested in the distribution of $\p_n({q})$ in the interval $(0,\frac{1}{n})$.
If we fix $n$ and let $q\to \infty$ through prime powers, we see that $\lim_q \p_n({q})$ does not exist. It thus makes sense to examine its average value instead. 
In Theorem~\ref{main}~\ref{111} we provide the answer to this question. However, the average is not greatly affected by the values that $\p_n({q})$ assumes when $q$ varies through genuine powers of primes. We therefore investigate the average of $\p_n({p^r})$ for fixed $n,p$ and for varying $r$. This is the content of Theorem~\ref{main}~\ref{112}. Lastly, by similar methods we provide the average value for the case
that Stong deals with, that is when the field is fixed and the rank $n$ varies, and that is the content of Theorem~\ref{main}~\ref{113}. We notice here that similar questions have been considered previously in the case where one has the multiplicative group $\Z/m\Z$ instead of the general linear group. See for example~\cite{li} and the survey~\cite{pompom}.

Before we state our first theorem, let us introduce some relevant terminology. 
For a prime $p$ and an integer $a$ coprime to $p$, we let $\ell_p(a)$ denote its multiplicative order $\md{p}$, that is the least positive integer $k$ for which $p^k\equiv 1 \md{a}$.
Define for a prime $p$ and integer $r$ the following series
\begin{equation}\label{ff}
\p(p,r) \coloneqq \Osum\limits_{m \in \N}\frac{\mu(m)}{m}\frac{\gcd\left( \ell_p(m),r \right)}{\ell_p(m)},
\end{equation}
Here and throughout this paper $\Osum$ denotes a summation over those positive integers $m$ that are coprime to $p$. We will show in Lemma~\ref{nerdos}~~\ref{443} that this series is convergent. 
We also define for an integer $n$ the quantity
\begin{equation}\label{product}
\p_n \coloneqq \frac{1}{n}\prod\limits_{p}\left(1-\frac{\gcd\left(p-1,n\right)}{p(p-1)} \right).
\end{equation}
Notice that $\p_1$ is the so--called Artin constant, arising in Artin's primitive root conjecture. 
The infinite product in \eqref{product} converges since for fixed $n$ one has $\gcd\left(p-1,n\right) \leq n$, hence 
\begin{equation}
\sum\limits_{p}\frac{\gcd\left(p-1,n\right)}{p(p-1)} \leq n\sum\limits_{p}\frac{1}{p(p-1)} <  \infty.
\end{equation}
Our first result is the following.

\begin{theorem}\label{main}
Let $x \in \mathbb{R}_{\geq 1}$ and denote the cardinality of the powers of primes below $x$ by $Q\left(x\right)$.
\begin{enumerate}
\item \label{111}	For any fixed $n \in \N$ and any $A \in \mathbb{R}_{>1}$ one has 
				\begin{equation}	
				\frac{1}{Q\left( x \right)}\sum_{q \leq x}\p_n(q) = \p_n+O_{n,A}\left(\frac{1}{\left(\log x\right)^A}\right),
				\end{equation}
				where the summation is taken over powers of primes.  	\\
\item \label{112}	For any fixed prime $p$ and $n \in \N$ one has 	
				\begin{equation}	
				\frac{1}{x}\sum_{r \leq x}\p_n(p^r) = \frac{1}{n}\p(p,n)+O\left(\frac{\log\left( x n \log p\right)}{xn}\right),
				\end{equation}
				where the implied constant is absolute.				\\
\item \label{113}	For any fixed $q=p^r$ one has 				
				\begin{equation}	
				\frac{1}{\log x}\sum_{n \leq x}\p_n(q) = \p(p,r)+O\left(\frac{\tau(r)\log \left(r\log p\right)}{\log x}\right),
				\end{equation}
				where the implied constant is absolute
				and $\tau$ denotes the divisor function.
\end{enumerate}
\end{theorem}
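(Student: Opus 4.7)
All three parts start from the M\"obius identity $\phi(N)/N=\sum_{d\mid N}\mu(d)/d$ applied to $N=q^n-1$. Writing $q=p^r$ and using that $d\mid p^{rn}-1$ precisely when $\gcd(d,p)=1$ and $\ell_p(d)\mid rn$, one obtains
\begin{equation}
\p_n(p^r)=\frac{1}{n}\Osum_{d\,:\,\ell_p(d)\mid rn}\frac{\mu(d)}{d}.
\end{equation}
Each regime is then reached by averaging the appropriate parameter and swapping summations.

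For part \ref{113}, with $r$ and $p$ fixed, set $k_d=\ell_p(d)/\gcd(\ell_p(d),r)$; then $\ell_p(d)\mid rn\Leftrightarrow k_d\mid n$, and interchanging the sums gives
\begin{equation}
\sum_{n\leq x}\p_n(q)=\Osum_{d}\frac{\mu(d)}{d}\sum_{\substack{n\leq x\\k_d\mid n}}\frac{1}{n}.
\end{equation}
Approximating the inner sum by $\tfrac{1}{k_d}\log(x/k_d)+\gamma/k_d+O(1/x)$ produces $(\log x)\,\p(p,r)$ as the leading term; once divided by $\log x$, the remaining contributions yield the announced error of size $\tau(r)\log(r\log p)/\log x$ after appealing to Lemma~\ref{nerdos}~~\ref{443}. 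Part \ref{112} is analogous: with $n,p$ fixed, $\#\{r\leq x:\ell_p(d)\mid rn\}=\lfloor x\gcd(\ell_p(d),n)/\ell_p(d)\rfloor$ splits into $x\gcd(\ell_p(d),n)/\ell_p(d)+O(1)$, the first part summing to $(x/n)\,\p(p,n)$ and the $O(1)$ errors being dispatched via the same lemma.

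Part \ref{111} is different in spirit. Since there are at most $O(\sqrt{x})$ prime powers $p^r\leq x$ with $r\geq 2$ and $\p_n(q)\leq 1/n$, one may replace $Q(x)$ by $\pi(x)$ and restrict the outer sum to primes $p\leq x$ at negligible cost. The M\"obius expansion and subsequent swap reduce matters to estimating $\#\{p\leq x:\gcd(p,d)=1,\ p^n\equiv 1\pmod{d}\}$, namely a count of primes in a union of $\gcd$-many residue classes modulo $d$. For squarefree $d\leq(\log x)^C$ the Siegel--Walfisz theorem yields $\pi(x)\prod_{\ell\mid d}\gcd(\ell-1,n)/(\ell-1)+O(x/(\log x)^A)$, while for larger $d$ a trivial bound using $\phi(d)\gg d/\log\log d$ suffices. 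Summing over $d$ collapses the main term into the Euler product appearing in \eqref{product}, producing $\p_n$ with the advertised error $O_{n,A}((\log x)^{-A})$.

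The principal difficulties are: in \ref{112} and \ref{113}, the absolute series $\Osum_d(|\mu(d)|/d)/\ell_p(d)$ diverges, so the tail bounds must genuinely exploit the cancellation in $\mu(d)$, which is the technical content of Lemma~\ref{nerdos}~~\ref{443}; in \ref{111}, one must propagate the Siegel--Walfisz error through the $d$-sum and verify by a multiplicativity computation that the Euler product obtained coincides with the one defining $\p_n$ in \eqref{product}.
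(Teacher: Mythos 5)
Your plan is broadly sound but takes a somewhat different route from the paper in parts~\ref{111} and~\ref{113}, and one of your side remarks is wrong.

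For part~\ref{111} the paper does not use Siegel--Walfisz. It introduces the $\Lambda$-weighted sum $\Psi_n(x)=\sum_{k\leq x}\Lambda(k)\phi(k^n-1)/(k^n-1)$, M\"obius-inverts to $\sum_{m\leq x^n}\frac{\mu(m)}{m}\Psi_n(x;m)$, applies Bombieri--Vinogradov for $m\leq\sqrt{x}/(\log x)^A$ and trivial/Brun--Titchmarsh-style bounds for larger $m$ (Lemmas~\ref{range} and~\ref{asympt}), and finally removes the $\Lambda$-weight via Lemma~\ref{gyros}. Your Siegel--Walfisz version works for the stated error $O_{n,A}((\log x)^{-A})$, since the main term for $d>(\log x)^C$ can be absorbed using the tail estimate from Lemma~\ref{omeg}, and the contribution of the actual counts for $d>(\log x)^C$ is controlled by the trivial bound $\rho_n(d)(x/d+1)$ together with Lemma~\ref{omeg}. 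It is simpler in that it avoids the $\Lambda$-weight, but you lose the extended range of moduli that Bombieri--Vinogradov (and, conditionally, GRH) provides; the paper uses that range to obtain the conditional improvement $O_n((\log x)^{\tau(n)+2}/\sqrt{x})$ remarked after the theorem, which your method would not yield.

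For part~\ref{113} the paper proceeds in two steps: Lemma~\ref{phi} gives $\sum_{n\leq x}\phi(p^{rn}-1)/(p^{rn}-1)=\p(p,r)x+O(\log(xr\log p))$ by M\"obius inversion, swapping, writing $[t]=t+O(1)$, and using Lemma~\ref{nerdos}\ref{441}--\ref{442}; then the $1/n$ weight is inserted by partial summation. You instead carry the $1/n$ weight into the swap and expand $\sum_{n\leq x,\,k_d\mid n}1/n$. That is workable, but you undersell the bookkeeping: besides the $\gamma/k_d$ and $O(1/x)$ pieces, the expansion produces a $-(\log k_d)/k_d$ term, and controlling $\sum_d\mu^2(d)d^{-1}(\log k_d)/k_d$ genuinely requires partial summation against Lemma~\ref{nerdos}\ref{441}, not merely \ref{443}. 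The paper's order of operations sidesteps this.

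Finally, your closing remark that "the absolute series $\Osum_d (|\mu(d)|/d)/\ell_p(d)$ diverges, so the tail bounds must genuinely exploit the cancellation in $\mu(d)$" is incorrect. That series is absolutely convergent: this is a Romanoff-type series, and Lemma~\ref{nerdos}\ref{442} with $x=1$, $d=1$ gives $\Osum_m 1/(m\,\ell_p(m))\ll\log\log p$. Indeed the paper's proof of~\ref{443} bounds $|\mu(m)|\leq 1$ outright and uses no cancellation at all.
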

\begin{remark}
The above theorem reveals the noteworthy fact that the density of Singer cycles in $\GL_n(q)$ is approximated on average by a constant multiple of $\frac{1}{n}$
when one allows any of the underlying parameters to vary.
\end{remark}

The special case corresponding to $n=1$ in Theorem~\ref{main}~\ref{111}
has been dealt with previously by Stephens~\cite[Lemma 1]{step} and in an equivalent 
form by Pillai~\cite[Theorem 1]{pill}. Following the proof of Theorem~\ref{main}~\ref{111}, 
we show that the error of approximation can be substantially improved to 
\begin{equation}
O_n\left(\frac{(\log x)^{\tau(n)+2}}{\sqrt{x}}\right)
\end{equation} 
under the assumption of the Generalised Riemann Hypothesis.

One should notice that the sequence $\p_n(q)$ oscillates wildly around its mean value in all cases of Theorem~\ref{main} and 
it would therefore be interesting to attain information regarding
the nature of its distribution. It thus makes sense to examine how $\p_n(q)$ distributes over subintervals of $\left(0,\frac{1}{n}\right)$.
To that end let us recall some standard definitions from Probabilistic Number Theory (see~\cite[Chapter III]{tene} for a detailed discussion).
Let  $b_n$ be a sequence of real numbers
and define for
$x,z \in \R$, 
the frequencies $\nu_x$  as follows:
\begin{equation}
\nu_x \left( n ; b_n \leq z \right)		\coloneqq		\frac{\left|\left\{n\leq x: b_n\leq z\right\}\right|}{x}.
\end{equation}
Similarly denote
\begin{equation}
\nu_x\left(q;b_q \leq z \right)			\coloneqq		\frac{\left|\left\{q\leq x: q \ \text{is a prime power} ,b_q\leq z\right\}\right|}{Q(x)}.
\end{equation} 
We say that \textit{the frequencies $\nu_x$ converge to a limiting distribution as 
$x \to \infty$,}
if for any $z$ in a certain dense subset $E \subset \R$, the following limit exists
\begin{equation}
\lim_{x \to \infty}	\nu_x\left(n;b_n \leq z\right),
\end{equation}
and furthermore, denoting its value by $F(z)$, that one has 
\begin{equation}
\lim_{\substack{z \to \alpha \\z \in E}}F(z)=
\left\{
        \begin{array}{ll}
            1,\,\ \text{if $\alpha = +\infty$}\\
            0,\,\ \text{if $\alpha = -\infty$}
        \end{array}
    \right.
.\end{equation}
Thus the existence of a limiting distribution should be interpreted as an equidistribution of $b_n$ with respect to some measure.
We are now ready to state our second theorem.
\begin{theorem}\label{main2}
The frequencies $\nu_x$, with respect to any of the involved parametres, converge to a limiting distribution. More precisely: 
\begin{enumerate}
\item 	\label{main2i}	For fixed $n \in \N$ the frequencies $\nu_x \left(q;\p_n(q) \leq z\right)$ converge to a continuous limiting distribution.
\item 	\label{main2ii}	For fixed prime $p$ and integer $n$ the frequencies $\nu_x \left(r;\p_n(p^r) \leq z\right)$ converge to a limiting distribution.
\item 	\label{main2iii}	For fixed prime power $q=p^r$ the frequencies $\nu_x(n;\p_n(q) \leq \frac{z}{n})$ converge to a limiting distribution.
\end{enumerate}
\end{theorem}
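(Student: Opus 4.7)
The plan is to write
\[
\p_n(q) = \frac{1}{n}\prod_{\substack{\ell \text{ prime} \\ \ell \mid q^n-1}}\Bigl(1 - \frac{1}{\ell}\Bigr),
\]
so that the distribution of $\p_n(q)$ is governed by which primes divide $q^n - 1$. This places the problem in the Erd\H{o}s--Wintner / Kubilius framework: truncate to primes $\ell \leq y$, determine the limiting distribution of the truncated product as $x \to \infty$, and separately control the tail contribution from primes $\ell > y$ uniformly in $x$.

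For part \ref{main2i}, fix $n$ and let $q$ vary over prime powers; since proper prime powers contribute only $O(\sqrt{x})$ to $Q(x)$, it suffices to consider $q$ prime. For a prime $\ell \neq q$, the event $\ell \mid q^n-1$ amounts to $q$ lying in one of $\gcd(\ell-1,n)$ residue classes modulo $\ell$, so by the Siegel--Walfisz theorem it has limiting frequency $\alpha_\ell := \gcd(\ell-1,n)/(\ell-1)$. Joint events for finitely many primes $\ell_1,\dots,\ell_k$ reduce, via the Chinese Remainder Theorem, to a residue condition modulo $\ell_1\cdots\ell_k$, and the joint limiting frequencies factor as $\prod_i \alpha_{\ell_i}$. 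Thus the truncated function $\prod_{\ell \leq y,\ \ell \mid q^n-1}(1-1/\ell)$ converges in distribution to $\prod_{\ell \leq y}(1-X_\ell/\ell)$, where the $X_\ell$ are independent $\mathrm{Bernoulli}(\alpha_\ell)$. The Brun--Titchmarsh inequality gives $\mathbb{P}(\ell \mid q^n-1) \ll n(\log x)/\ell$ uniformly in $\ell$, whence
\[
\mathbb{E}\!\left[\sum_{\substack{\ell > y \\ \ell \mid q^n-1}} \log\frac{1}{1-1/\ell}\right] \ll n(\log x)\sum_{\ell > y}\frac{1}{\ell^2} \ll \frac{n\log x}{y},
\]
which can be made $o(1)$ by choosing, say, $y = (\log x)^2$; hence weak convergence of $\p_n(q)$ follows. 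Continuity of the limit is a Jessen--Wintner purity argument: every prime $\ell > n+1$ satisfies $0 < \alpha_\ell < 1$, so infinitely many $X_\ell$ are non-degenerate with non-zero multipliers, from which one deduces that the limiting distribution carries no atoms.

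For parts \ref{main2ii} and \ref{main2iii} the multiplicative decomposition is identical, but the limiting frequencies arise from pure periodicity rather than from PNT in progressions. In \ref{main2ii}, $\ell \mid p^{rn}-1$ iff $\ell_p(\ell)/\gcd(\ell_p(\ell),n) \mid r$, an exact periodic condition yielding frequency $\gcd(\ell_p(\ell),n)/\ell_p(\ell) + O(1/x)$; joint events are handled by taking least common multiples. In \ref{main2iii}, symmetrically, $\ell \mid p^{rn}-1$ iff $\ell_p(\ell)/\gcd(\ell_p(\ell),r) \mid n$. The tail estimate transports essentially verbatim, and the Erd\H{o}s--Wintner scheme delivers a limiting distribution in each case. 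No continuity claim is made here, which is fortunate, as the limit may genuinely carry atoms.

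The main obstacle lies in arranging the double limit, $x \to \infty$ followed by $y \to \infty$, coherently. For \ref{main2ii} and \ref{main2iii} the residue conditions on $r$ or $n$ are exact up to boundary error $O_y(1/x)$, so the interchange is essentially automatic. For \ref{main2i} one must pair Siegel--Walfisz (giving accurate frequencies for $\ell \leq (\log x)^A$) with Brun--Titchmarsh (controlling the remaining range $\ell > y$) and couple $y$ to $x$ so that both errors vanish simultaneously; this is standard but requires care. The continuity assertion in \ref{main2i} is the only point at which a genuinely probabilistic ingredient beyond analytic number theory enters.
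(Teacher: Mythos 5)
Your proof is correct in substance, but it follows a genuinely different route from the paper. The paper proves part~\ref{main2i} by appealing to a cited theorem of Toleuov and Fa\u{\i}nle\u{\i}b on the distribution of $f(|g(p)|)$ for $f$ strongly additive and $g\in\Z[x]$, checking three convergence conditions for $f(k)=\log(\phi(k)/k)$ and $g_n(x)=x^n-1$ and obtaining continuity from the divergence of $\sum_p \rho_{g_n}(p)/(p-1)$; it proves parts~\ref{main2ii} and~\ref{main2iii} jointly by establishing a single lemma for $\phi(\eta^{ck}-1)/(\eta^{ck}-1)$ via Tenenbaum's general truncation criterion, choosing $\alpha_\epsilon(k)=\prod_{p^\lambda\|k,\,p\leq y}p^\lambda$ with $y=\max\{\eta,\exp(c\log\eta/\epsilon^2)\}$ and bounding the discrepancy via the Romanoff-type sum $\sum_{p>y}1/(p\ell_\eta(p))$. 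You instead reconstruct a direct Erd\H{o}s--Wintner/Kubilius argument from scratch, using the factorisation $\phi(m)/m=\prod_{\ell\mid m}(1-1/\ell)$, asymptotic independence via CRT plus equidistribution of primes in progressions, Brun--Titchmarsh for the tail, and a Jessen--Wintner purity argument for continuity. What this buys you: a unified treatment of all three parts, an explicit description of the limit law in part~\ref{main2i} as a random Euler product $\frac{1}{n}\prod_\ell(1-X_\ell/\ell)$ with independent $X_\ell\sim\mathrm{Bernoulli}(\gcd(\ell-1,n)/(\ell-1))$, and a self-contained continuity proof. What the paper's route buys: the delicate uniform truncation and $x\to\infty,\,y\to\infty$ interchange is outsourced to the cited theorems, so the argument is shorter and the verification reduces to checking hypotheses.

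Two small technical remarks. First, your Brun--Titchmarsh bound $\mathbb{P}(\ell\mid q^n-1)\ll n(\log x)/\ell$ is lossier than necessary: for $\ell\leq\sqrt{x}$ Brun--Titchmarsh gives $\ll n/\ell$ with \emph{no} $\log x$, and this is what makes the tail estimate $\mathbb{E}[\text{tail}]\ll n/y + o_x(1)$ uniform in $x$, so that you can fix $y$, let $x\to\infty$, and only then let $y\to\infty$ -- the ``coupling'' you worry about is actually automatic once the bound is stated correctly. Second, for parts~\ref{main2ii} and~\ref{main2iii} the divisibility events $\ell_i\mid p^{rn}-1$ for distinct $\ell_i$ are generally \emph{not} independent (the joint density is $1/\mathrm{lcm}(d_i)$, not $\prod 1/d_i$, where $d_i=\ell_p(\ell_i)/\gcd(\ell_p(\ell_i),n)$); your phrasing ``handled by taking least common multiples'' correctly anticipates this, but it should be stated that the truncated limit law here is not a product of independent Bernoullis, only that it exists because $r$ equidistributes modulo any fixed modulus. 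Neither point affects the correctness of the conclusion.
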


Let us put Theorem~\ref{main2} in context.
The easier problem of determining the frequencies 
$\nu_x \left(n;\phi(n)/n \leq z\right)$, where one 
ranges general integers $n$, has received much attention. 
Shoenfeld~\cite{schoen1} proved that these frequencies converge 
to a limiting distribution, say $F(z)$, and that $F(z)$ is continuous.
In a subsequent paper~\cite{schoen2} he showed that $F(z)$ is strictly 
increasing. Erd\H{o}s~\cite{paulie} discovered the following asymptotic 
expression:
\begin{equation} \label{sofos} 
F(1-\epsilon)=1-\frac{e^{-\gamma}}{\log \frac{1}{\epsilon}}
+O\left(\frac{1}{\left(\log \frac{1}{\epsilon}\right)^2}\right)
\end{equation}
uniformly for all $\epsilon \in (0,1)$, where $\gamma$ is the Euler constant.
Toulmonde~\cite{toul} proved 
that we can in fact get a more precise expression
in the right-hand-side of~\eqref{sofos}, involving an asymptotic expansion in negative powers of
$\frac{1}{\log\left(1/\epsilon\right)}$ in place of the error term.
The problem however becomes less easy when we range over 
powers of primes rather than general integers.
The special case of part~\ref{main2i} of Theorem~\ref{main2} corresponding 
to $n=1$ has been handled by K{\'a}tai~\cite{katari}, who showed 
that the limiting distribution exists and is continuous. 
Deshouillers and Hassani~\cite{soublakakakia} proved that this limiting distribution possesses infinitely many points of non--differentiability.
It would be desirable to have analogues of \eqref{sofos}
for part~\ref{main2i} of Theorem~\ref{main2}, even in the case $n=1$,
a problem which is equivalent to obtaining an analogue of \eqref{sofos}
regarding the limiting distribution of $\displaystyle\frac{\phi(p-1)}{p-1}$,
as $p$ ranges in primes.

A few remarks about the proofs of the two theorems are in order. Part~\ref{111} of Theorem~\ref{main} is proved via 
splitting a certain sum over moduli into two ranges according to the size of the moduli. 
The contribution coming from the range corresponding to small moduli
will give the main term after an application of the Bombieri--Vinogradov Theorem~\ref{Bombieri--Vinogradov} and 
the range corresponding to large moduli will be shown to give a negligible contribution compared to the main term.

The proofs of Theorem~\ref{main}~\ref{112} and Theorem~\ref{main}~\ref{113} are rather similar; both are based on 
Lemma~\ref{phi}, which is similar to~\cite[Theorem 3]{shpar}. There, however, the dependence of the error term 
on the underlying parameters is not best possible and we will attempt to remedy this. The proof of Lemma~\ref{phi} 
is based on Lemma~\ref{nerdos}, where all sums appearing in its statement resemble Romanoff's series
\begin{equation}
\Osum_{m \in \N}\frac{1}{m\ell_p(m)},
\end{equation}
where $p$ is a fixed prime. We will bound such sums by introducing the auxiliary quantity $E_p(x,d)$, defined later 
in the proof of Lemma~\ref{nerdos}, which is a trick introduced by Erd\H{o}s~\cite{roman}.

The proof of Theorem~\ref{main2}~\ref{main2i} is conducted via restricting attention to prime numbers below $x$ and then 
applying a theorem in~\cite{tole} regarding distribution laws of $f\left(\left|g(p)\right|\right)$, for an additive function $f$
and a polynomial $g \in \Z[x]$.
The proofs of parts~\ref{main2ii}~and~\ref{main2iii} of Theorem~\ref{main2} are again quite similar. Here, however, the sequence involved is not additive. Therefore 
we have to use a result for distribution laws of general arithmetic functions.

The rest of the paper is organised as follows. 
In section~\ref{singers} some background on Singer cycles is given, resulting in the explicit estimation of $\p_n(q)$.
In section~\ref{preliminaries} we provide some analytic tools that will be used later in the proof of Theorem~\ref{main}. 
In section~\ref{lemmata} some auxiliary lemmata regarding upper bounds of sums of certain arithmetic functions are provided. 
Finally, in sections~\ref{mainproof} and~\ref{mainproof2} we provide the proof of Theorems~\ref{main} and~\ref{main2} respectively.

\subsection{Notation}
Throughout this paper $p$ will denote a prime and $q$ a (not necessarily proper) power of a prime.
\begin{enumerate}
\item 	The notation $\sum_{p}$ is understood to be a sum taken over primes; similarly, $\sum_q$ should be read as a sum taken over powers of primes, and the same principle 			applies to products. As already mentioned, $\Osum$ denotes a summation over positive integers that are coprime to $p$. \\
\item 	We shall write $p^\lambda\|n$ for a prime $p$ and positive integers $\lambda$, $n$ if $p^\lambda \mid n$ and $p^{\lambda+1}\nmid n$. \\
\item		For the real functions $f(x)$, $g(x)$, defined for $x>0$, the notation $f(x)=O\left(g(x)\right)$ (or, equivalently, $f(x) \ll g(x)$) means that there exists an absolute constant 
		$M>0$ independent of $x$, such that $\left|f(x)\right| \leq M \left|g(x)\right|$ for $x>0$. We shall write $f(x)=g(x)+O\left(h(x)\right)$ if $(f-g)(x)=O\left(h(x)\right)$. When the implied constant depends on a set of parameters $\mathcal{S}$, we shall write $f=O_{\mathcal{S}}(g)$, or $f \ll_{\mathcal{S}}g$. If no such subscript appears, then the implied constant is absolute.\\
\item 	As usual, we let $\mu(n)$ the M\"{o}bius function, $\sigma(n)$ the sum of divisors of $n$,
		and $\Lambda(n)$ the von Mangoldt function. This, we recall, is defined as
		\begin{equation}
		\Lambda(n)=
		\begin{cases}
		\log p,   & \quad\text{if}~n~\text{is a power of a prime}~p, \\
       		0,   & \quad\text{otherwise}.
		\end{cases}
		\end{equation}
\end{enumerate}
\section{Singer cycles}\label{singers}
An element of order $q^n-1$ in $\GL_n(q)$ is called a \textit{Singer cycle}. That Singer cycles always exist can be seen as follows.

Let $\F_{q^n}$ be the $n$-degree field extension of $\F_q$, and let $\alpha$ be a primitive element of $\F^{\ast}_{q^n}$.
The map 
\begin{equation}
s : \F_{q^n} \to \F_{q^n}, \,\ x \mapsto \alpha x
\end{equation} 
is $\F_q$-linear and invertible. Further, the order of $s$ is equal to that of $\alpha$ in $\F^{\ast}_{q^n}$, that is, $q^n-1$.

In fact the integer $q^n-1$ is maximal among possible element orders in $\GL_n(q)$.
To see why that must be the case, consider the algebra $\mat_n(q)$ of all $n \times n$ matrices over $\F_q$ and note that each element $A \in \GL_n(q)$ generates 
a subalgebra $\F_q \left[ A\right] \subseteq \mat_n(q)$. The Cayley-Hamilton theorem then ensures that $\dim \F_q \left[ A \right] \leq n$, thus $o(A) \leq q^n-1$ for all $A \in \GL_n(q)$. 

Our claim now is that $o(A)= q^n-1$ if and only if the minimal polynomial $m_A(x)$ of $A$ is primitive of degree $n$ (recall that $f$ is a primitive polynomial if and only if any root of $f$ in the splitting field of $f$ generates the multiplicative group of that field). For this we shall need the following~\cite[Lemma 3.1]{ffields}.
\begin{lemma}
Let $f \in \F_q[x]$ be a polynomial of degree $m \geq 1$ with $f(0) \neq 0$. Then there exists a positive integer $e \leq q^m -1$ such that $f(x) \mid x^e-1$.
\end{lemma}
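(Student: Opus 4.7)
The plan is to pass to the finite quotient ring $R \coloneqq \F_q[x]/(f(x))$, which has exactly $q^m$ elements, and exhibit $e$ as the multiplicative order of the class of $x$ in $R$.

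The first step is to verify that the coset $\bar{x}$ is a unit of $R$. The hypothesis $f(0) \neq 0$ says precisely that $x \nmid f(x)$ in the principal ideal domain $\F_q[x]$; since $x$ is irreducible, this forces $\gcd(x, f(x)) = 1$. By B\'ezout there exist $u, v \in \F_q[x]$ with $u(x)\,x + v(x)\,f(x) = 1$, and reducing modulo $f$ shows $\bar{u}\,\bar{x} = 1$ in $R$, so $\bar{x} \in R^\times$.

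The second step is the pigeonhole bound. Since $R$ has $q^m$ elements and the zero class is not invertible, the group of units $R^\times$ has cardinality at most $q^m-1$. By Lagrange, the order $e$ of $\bar{x}$ in $R^\times$ then satisfies $e \leq q^m-1$. Unwinding, $x^e \equiv 1 \pmod{f(x)}$, that is $f(x) \mid x^e-1$, which is exactly the claim.

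There is no substantive obstacle here; the argument is essentially the classical proof that any unit in a finite ring has finite order bounded by the size of the unit group. The only point worth stressing is the role of $f(0) \neq 0$: it is precisely what guarantees that $\bar{x}$ is invertible in $R$, and without it no power of $x$ could be congruent to $1$ modulo $f$.
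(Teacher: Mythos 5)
Your argument is correct. Note, however, that the paper itself offers no proof of this lemma; it is cited verbatim from Lidl and Niederreiter \cite[Lemma 3.1]{ffields} and used as a black box, so there is no in-paper proof to match against. The proof given in that reference is a notch more elementary than yours: one observes that the $q^m$ residue classes $\overline{x}^{\,0}, \overline{x}^{\,1}, \dots, \overline{x}^{\,q^m-1}$ in $R = \F_q[x]/(f)$ are all nonzero (precisely because $f(0)\neq 0$ forces $\gcd(x,f)=1$), so by pigeonhole among the $q^m-1$ nonzero classes two of them coincide, say $\overline{x}^{\,r} = \overline{x}^{\,s}$ with $0 \leq r < s \leq q^m-1$; cancelling the unit $\overline{x}^{\,r}$ yields $\overline{x}^{\,s-r}=1$ with $0 < s-r \leq q^m-1$. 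Your version packages the same two ingredients --- invertibility of $\overline{x}$ and the count $|R^{\times}|\leq q^m-1$ --- through Lagrange's theorem applied to the cyclic subgroup $\langle \overline{x}\rangle \leq R^{\times}$. This is slightly more structural, invokes Lagrange where the textbook needs only pigeonhole and cancellation, and has the mild bonus that the $e$ you produce is the minimal valid exponent (the multiplicative order of $\overline{x}$). Both are the same counting idea at heart; either would be accepted here.
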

The least such $e$ is called the \textit{order} of $f$ and is denoted by $\ord (f)$. Clearly $A^k = I_n$ if and only if $m_A(x) \mid x^k -1$, $k \in \N$. 
Thus\footnote{This yields yet another proof that the order of a matrix is at most $q^n-1$.}
\begin{equation}
o(A) = \ord \left( m_A \right).
\end{equation}
From Lidl and Niederreiter~\cite[Theorem 3.3]{ffields} we know that the order of an irreducible polynomial of degree $n$ over $\F_q[x]$ is equal to the order of any of its roots 
in $\F^{\ast}_{q^n}$. Thus  $\ord \left( m_A \right) = q^n-1$ if and only if $m_A$ is primitive and $\deg m_A = n$, as wanted. Note that the above argument shows that the minimal and the characteristic polynomial of a Singer cycle coincide. 

We shall now give a proof for the number of Singer cycles as an application of a theorem of Reiner~\cite[Theorem 2]{reiner}, but it ought to mentioned that the same formula can be obtained via the familiar Orbit--Stabiliser Theorem (a Singer cyclic subgroup has index exactly $n$ in its normaliser). Reiner computes the number of (not necessarily invertible) $n \times n$ matrices with entries in the finite field $\F_q$ having given characteristic polynomial.
Let $R_n$ denote the ring of all $n \times n$ matrices with entries in~$\F_q$, and define $F(u,r) = \prod\limits_{i=1}^{r} \left( 1-u^{-i} \right)$, where $F(u,0)=1$. 

\begin{theorem}[Reiner~\cite{reiner}]
Let $g(x) \in \F_q[x]$ be a polynomial of degree $n$, and let
\begin{equation}	
g(x) = f_1^{n_1}(x) \cdots f_k^{n_k}(x)
\end{equation}
be its factorisation in $\F_q[x]$ into powers of distinct irreducible polynomials $f_1(x), \dots f_k(x)$. Set
$d_i \coloneqq	\deg \left(f_i(x)\right)$, $1 \leq i \leq k$. Then the number of matrices $X \in R_n$ with characteristic polynomial $g(x)$ is
\begin{equation}	
q^{n^2-n}\frac{F(q,n)}{\prod\limits_{i=1}^{k}F\left( q^{d_i},n_i\right)}.
\end{equation}
\end{theorem}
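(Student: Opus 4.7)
The plan is to reinterpret each matrix in $R_n$ as an $\F_q[x]$-module structure on $V := \F_q^n$ and exploit the primary decomposition of $V$ coming from the factorisation $g=\prod_i f_i^{n_i}$ to reduce the problem to the case of a single irreducible factor, which is then handled through the representation-theoretic description of $\GL$-conjugacy classes. Concretely, any $X\in R_n$ makes $V$ into a finitely generated $\F_q[x]$-module (letting $x$ act as $X$), and the characteristic polynomial of $X$ coincides with that of the module. The Chinese Remainder Theorem applied to $\F_q[x]/(g)$ then splits $V$ canonically as $V=\bigoplus_{i=1}^{k}V_i$, where $V_i$ is the $f_i$-primary component, of $\F_q$-dimension $d_i n_i$.

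Since $\GL_n(q)$ acts transitively on ordered direct-sum decompositions of $V$ of prescribed shape $(d_1 n_1,\dots,d_k n_k)$, with point-stabiliser $\prod_i \GL_{d_i n_i}(q)$, counting pairs (decomposition, module structure on each summand) gives
\begin{equation}
N(n,g)=\frac{|\GL_n(q)|}{\prod_{i=1}^{k}|\GL_{d_i n_i}(q)|}\prod_{i=1}^{k} N(d_i n_i,f_i^{n_i}),
\end{equation}
where $N(m,h)$ denotes the number of matrices in $\mat_m(q)$ with characteristic polynomial $h$. This reduces matters to computing $N(dm,f^m)$ for a single irreducible $f$ of degree $d$. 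For such $f$, modules annihilated by some power of $f$ are precisely the finitely generated torsion modules over the discrete valuation ring $R:=\F_q[x]_{(f)}$ with residue field of size $\kappa:=q^d$; by the structure theorem these are classified by partitions $\lambda\vdash m$, with representatives $M_\lambda=\bigoplus_j R/(\pi^{\lambda_j})$ for $\pi$ a uniformiser. Under this correspondence the stabiliser in $\GL_{dm}(q)$ of the matrix associated to $M_\lambda$ is the $R$-automorphism group $\Aut_R(M_\lambda)$, so orbit--stabiliser yields
\begin{equation}
N(dm,f^m)=|\GL_{dm}(q)|\sum_{\lambda\vdash m}\frac{1}{|\Aut_R(M_\lambda)|}.
\end{equation}

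The crux of the argument is the closed-form evaluation
\begin{equation}
\sum_{\lambda\vdash m}\frac{1}{|\Aut_R(M_\lambda)|}=\frac{1}{\kappa^{m}F(\kappa,m)},
\end{equation}
which, combined with $|\GL_{dm}(q)|=q^{(dm)^2}F(q,dm)$, gives $N(dm,f^m)=q^{(dm)^2-dm}F(q,dm)/F(q^d,m)$; substituting back into the factorisation above then produces the claimed formula. This identity is the principal obstacle: each $|\Aut_R(M_\lambda)|$ admits an explicit product expression in terms of the conjugate partition and multiplicities of parts (compare Macdonald, \emph{Symmetric Functions and Hall Polynomials}, Chapter II), and the sum can be carried out either by a generating-function argument of Hall--Littlewood type or inductively on $m$ via the short exact sequence $0\to \pi M_\lambda\to M_\lambda\to M_\lambda/\pi M_\lambda\to 0$, which relates $\Aut_R(M_\lambda)$ to automorphism groups of modules of shorter length. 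Once the identity is secured, everything else is routine bookkeeping.
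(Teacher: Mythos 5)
The paper does not prove this theorem: it is quoted verbatim from Reiner's 1961 paper~\cite{reiner} as an external ingredient, so there is no ``paper's own proof'' to compare against. Your module-theoretic argument is a correct route to the formula, and indeed a standard modern one: the primary decomposition reduces to $N(n,g)=\bigl(|\GL_n(q)|/\prod_i|\GL_{d_in_i}(q)|\bigr)\prod_i N(d_in_i,f_i^{n_i})$, the orbit--stabiliser count for a single $f$ gives $N(dm,f^m)=|\GL_{dm}(q)|\sum_{\lambda\vdash m}|\Aut_R(M_\lambda)|^{-1}$, and substituting $|\GL_n(q)|=q^{n^2}F(q,n)$ and $\sum_{i}d_in_i=n$ does yield Reiner's expression once your partition identity is granted. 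All of that bookkeeping is sound.

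The one real gap is exactly where you flag it: the identity
\begin{equation}
\sum_{\lambda\vdash m}\frac{1}{|\Aut_R(M_\lambda)|}=\frac{1}{\kappa^{m}F(\kappa,m)}
\end{equation}
is asserted but not proved, and it carries essentially all of the content. You gesture at Macdonald or an inductive argument, which would work, but neither is carried out. A quick way to close it without entering Hall--Littlewood territory is to observe that the identity is equivalent, via the same orbit--stabiliser bookkeeping you already use, to the Fine--Herstein count that the number of nilpotent $m\times m$ matrices over $\F_\kappa$ is $\kappa^{m^2-m}$: every nilpotent matrix gives a torsion module over the local ring $R$ with residue field $\F_\kappa$ and invariant-factor type $\lambda\vdash m$, so
\begin{equation}
\kappa^{m^2-m}=|\GL_m(\kappa)|\sum_{\lambda\vdash m}\frac{1}{|\Aut_R(M_\lambda)|},
\end{equation}
and dividing by $|\GL_m(\kappa)|=\kappa^{m^2}F(\kappa,m)$ gives the claim. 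Citing Fine--Herstein (or supplying any of its short proofs) would make your argument complete; as written, the crux is deferred. Also, a small imprecision worth tightening: ``the characteristic polynomial of $X$ coincides with that of the module'' should be replaced by the precise statement that the $f_i$-primary component of $V$ has $\F_q$-dimension $d_in_i$ because the characteristic polynomial equals the product of the elementary divisors.

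Reiner's original proof is different in flavour (a direct combinatorial count using canonical forms and a generating-function recursion); your approach trades that for the structure theory of modules over a DVR plus one numerical identity, which is arguably cleaner and more conceptual once the identity is in hand.
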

Now take $k=1$, $n_1=1$, and $d_1=n$ in the above formula and notice that a matrix whose characteristic polynomial is thus parameterised is necessarily invertible. 
In particular there are 
\begin{equation}	
q^{n^2-n}\frac{F(q,n)}{F\left( q^n,1\right)} = \frac{\left| \GL_n(q) \right|}{q^n-1}
\end{equation}
such matrices. Since there are precisely 
\begin{equation}
\frac{\phi \left( q^n-1\right)}{n}
\end{equation}
primitive (thus also irreducible) polynomials of degree $n$ in $\F_q [x]$, the following has been proved.
\begin{lemma}
The number of Singer cycles in $\GL_n(q)$ is given by the formula
\begin{equation}
\frac{\left| \GL_n(q) \right|}{q^n-1}  \frac{\phi \left( q^n-1\right)}{n}.
\end{equation}
\end{lemma}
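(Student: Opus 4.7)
The plan is to combine the characterisation of Singer cycles given earlier in this section with Reiner's counting theorem and the well-known count of primitive polynomials. The starting point is the fact already established above that $A\in\GL_n(q)$ has order $q^n-1$ if and only if its minimal polynomial $m_A$ is primitive of degree $n$; in particular $m_A$ is irreducible, and since $\deg m_A = n$ it must coincide with the characteristic polynomial of $A$. Thus counting Singer cycles amounts to summing, over primitive polynomials $g\in\F_q[x]$ of degree $n$, the number of matrices with characteristic polynomial $g$.

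For the inner count I would invoke Reiner's theorem with $k=1$, $n_1=1$, $d_1=n$, which is legitimate because a primitive polynomial is irreducible. The formula then gives
\begin{equation}
q^{n^2-n}\,\frac{F(q,n)}{F(q^n,1)}
\end{equation}
matrices with the given characteristic polynomial. A short manipulation identifies this with $|\GL_n(q)|/(q^n-1)$: indeed $|\GL_n(q)|=q^{n^2}F(q,n)$, while $F(q^n,1)=1-q^{-n}=(q^n-1)/q^n$, and the powers of $q$ balance. A pleasant consistency check is that each such matrix is automatically invertible, since its characteristic polynomial has nonzero constant term.

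For the outer count I would use the standard fact that the primitive polynomials of degree $n$ over $\F_q$ are precisely the minimal polynomials over $\F_q$ of the primitive elements of $\F_{q^n}^{\ast}$. There are $\phi(q^n-1)$ such primitive elements, and each primitive polynomial has exactly $n$ roots in $\F_{q^n}$, namely an orbit of size $n$ under the Galois group $\Gal(\F_{q^n}/\F_q)$ (the orbit is full because an irreducible polynomial over $\F_q$ of degree $n$ has no root in any proper subfield of $\F_{q^n}$). Hence there are $\phi(q^n-1)/n$ primitive polynomials of degree $n$, and multiplying the two counts yields the stated formula.

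I do not anticipate any genuine obstacle: the only slightly delicate point is verifying that the matrices produced by Reiner's formula in this particular specialisation are exactly the Singer cycles, with no double-counting, and this follows from the fact that a Singer cycle is determined by its characteristic polynomial (minimal equals characteristic) together with a conjugate in $\GL_n(q)$, and we are summing over the correct set of characteristic polynomials.
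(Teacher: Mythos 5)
Your proof follows exactly the paper's route: characterise Singer cycles as matrices whose minimal (hence characteristic) polynomial is primitive of degree $n$, invoke Reiner's theorem with $k=1$, $n_1=1$, $d_1=n$ to count $|\GL_n(q)|/(q^n-1)$ matrices per such polynomial, and multiply by the standard count $\phi(q^n-1)/n$ of primitive polynomials. The argument is correct and essentially identical to the paper's, with a bit more detail supplied on why $F(q,n)$, $F(q^n,1)$ simplify and on the Galois-orbit count of primitive polynomials.
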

\section{Preliminaries}\label{preliminaries}
In this section we recall standard results regarding the distribution of primes in arithmetic progressions. 
Denote by $\pi(x)=\sum_{p \leq x}1, x\geq 3$ the number of primes less than or equal to $x$ and by $\li(x)$ the logarithmic integral
\begin{equation}
\li(x) \coloneqq \int_{2}^x \frac{\mathrm{d}t}{\log t}.
\end{equation}
The Prime Number Theorem states that for each $A>0$ and for all $x \geq 3$ one has 
\begin{equation}
\pi(x)=\li(x)+O_A\left(\frac{x}{\left(\log x\right)^A}\right).
\end{equation} 
For coprime integers $m,a$ and all $x \geq 3$, define
\begin{equation}\label{psidefinition}
\psi(x;m,a)	\coloneqq	\sum	\limits_{\substack{n \leq x \\ n \equiv a \mdsub m}} \Lambda(n).
\end{equation}
The Bombieri--Vinogradov theorem \cite[\S 28]{davenport} then states that:
\begin{theorem}\label{Bombieri--Vinogradov} 
For any fixed $A>0$ and for all $x \geq 3$ one has
\begin{equation}
\sum_{m \leq \sqrt{x} / \left(\log x\right)^A}	\max \limits_{\substack{a \md{m} \\ (a,m)=1}}	\left|\psi(x;m,a)-\frac{x}{\phi(m)}\right| \ll_A\frac{x}{\left(\log x\right)^{A-5}}.
\end{equation}
\end{theorem}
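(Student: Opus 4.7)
The statement is the classical Bombieri--Vinogradov theorem, and in the excerpt it is cited from Davenport~\cite{davenport} rather than proved. The plan for a standard proof is the following. First, by orthogonality of Dirichlet characters modulo $m$ one writes
\begin{equation}
\psi(x;m,a)-\frac{x}{\phi(m)}=\frac{1}{\phi(m)}\sum_{\chi\neq \chi_{0}}\bar\chi(a)\,\psi(x,\chi)+O\!\left((\log x)^{2}\right),
\end{equation}
where $\psi(x,\chi):=\sum_{n\leq x}\chi(n)\Lambda(n)$ and the sum ranges over non-principal characters modulo $m$. Taking the maximum over $a$ coprime to $m$ and summing over $m\leq Q:=\sqrt{x}/(\log x)^{A}$, the task reduces to bounding $\sum_{m\leq Q}\phi(m)^{-1}\sum_{\chi\neq \chi_{0}}|\psi(x,\chi)|$. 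A second reduction replaces each induced character by the unique underlying primitive character $\chi^{*}$ of conductor $d\mid m$, $d>1$; after rearranging the $m$-sum by $d$, the problem becomes that of estimating $\sum_{d\leq Q}\phi(d)^{-1}(\log Q)\sum\nolimits^{*}_{\chi\bmod d}|\psi(x,\chi)|$, with the starred sum over primitive characters.

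Next, I would split the $d$-range at $d_{0}:=(\log x)^{B}$, with $B=B(A)$ chosen sufficiently large. For $d\leq d_{0}$ the Siegel--Walfisz theorem gives the individual bound $|\psi(x,\chi)|\ll x\exp(-c\sqrt{\log x})$, which when summed trivially absorbs easily into the target. For $d>d_{0}$, one inserts Vaughan's identity to decompose $\Lambda$ into Type I (smooth linear) and Type II (bilinear) contributions. The bilinear pieces are controlled, after a Cauchy--Schwarz step, by the analytic large-sieve inequality
\begin{equation}
\sum_{d\leq Q}\frac{d}{\phi(d)}\sum\nolimits^{*}_{\chi\bmod d}\Bigl|\sum_{M<n\leq M+N}a_{n}\chi(n)\Bigr|^{2}\ll (N+Q^{2})\sum_{n}|a_{n}|^{2},
\end{equation}
applied with coefficients $a_{n}$ coming from the Vaughan decomposition. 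The cutoff $Q=\sqrt{x}/(\log x)^{A}$ is chosen precisely so that the two terms $N\sim x$ and $Q^{2}$ balance up to polylogarithmic losses.

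The main obstacle is the technical orchestration of the bilinear estimates: one must split the Type I and Type II ranges correctly, dispose of endpoint contributions, and keep careful track of the various powers of $\log x$ accumulating from the repeated Cauchy--Schwarz applications, eventually recovering the explicit exponent $A-5$ on the right-hand side. All of this is standard but lengthy, which is why the paper wisely defers to Davenport's textbook rather than reproducing the argument; in my own writeup I would do exactly the same.
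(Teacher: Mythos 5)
You correctly identify that the paper treats this as a black-box citation to Davenport~\cite{davenport} and offers no proof of its own, and your sketch of the standard argument (orthogonality, reduction to primitive characters, Siegel--Walfisz for small moduli, Vaughan's identity with the large sieve for large moduli, with the cutoff $\sqrt{x}/(\log x)^A$ balancing the two terms in the sieve bound) is the same route Davenport's text follows. Your conclusion that one should defer to the reference rather than reproduce the lengthy argument matches the paper's own choice.
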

We shall also make use of the following lemma.
\begin{lemma}\label{pt} 
Let $a,m$ be coprime integers. Then one has the following estimate
\begin{equation}
\sum_{\substack{p\leq x\\p\equiv a \md{m}}}\frac{1}{p}=\frac{1}{\phi(m)}\log \log x+O_m(1),
\end{equation}
for $x\geq 3$. 
\end{lemma}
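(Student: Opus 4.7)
The plan is to deduce the lemma from the Prime Number Theorem for arithmetic progressions, combined with partial summation. Since the implied constant is allowed to depend on $m$, the classical Siegel--Walfisz estimate is amply sufficient: writing $\pi(t;m,a)$ for the number of primes $p\leq t$ with $p\equiv a\md{m}$, one has for fixed coprime $a,m$ and all $t\geq 3$
\begin{equation}
\pi(t;m,a) \;=\; \frac{\li(t)}{\phi(m)} + O_m\!\left(\frac{t}{(\log t)^2}\right).
\end{equation}

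First I would apply Abel summation with weight $f(t)=1/t$ to convert the sum of reciprocals into an integral against $\pi$:
\begin{equation}
\sum_{\substack{p\leq x\\p\equiv a \md{m}}}\frac{1}{p}
\;=\; \frac{\pi(x;m,a)}{x} \;+\; \int_{2}^{x}\frac{\pi(t;m,a)}{t^{2}}\,dt.
\end{equation}
The boundary term is $O_m(1/\log x)=O_m(1)$. Substituting the Siegel--Walfisz estimate into the integrand splits the integral into a main contribution $\phi(m)^{-1}\int_2^x \li(t)/t^2\,dt$ and an error $O_m\bigl(\int_2^x dt/(t(\log t)^2)\bigr) = O_m(1)$.

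Next I would evaluate the main integral by a single integration by parts with $u=\li(t)$, $dv=dt/t^2$:
\begin{equation}
\int_{2}^{x}\frac{\li(t)}{t^{2}}\,dt
\;=\; \left[-\frac{\li(t)}{t}\right]_{2}^{x} + \int_{2}^{x}\frac{dt}{t\log t}
\;=\; \log\log x + O(1),
\end{equation}
since $\li(x)/x \ll 1/\log x$. Reassembling the pieces yields the claimed asymptotic.

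The only genuine obstacle is the appeal to a quantitative form of Dirichlet's theorem on primes in arithmetic progressions, which is a standard but non-trivial input not proved in the excerpt. A strictly equivalent route, closer in spirit to the material already recorded in section~\ref{preliminaries}, would be to start from the Siegel--Walfisz estimate for $\psi(x;m,a)$ defined in~\eqref{psidefinition}, perform Abel summation at the level of $\sum \Lambda(n)/n$ restricted to the progression, strip off the $O(1)$ contribution of prime powers $p^k$ with $k\geq 2$ (using $\sum_p\sum_{k\geq 2}(\log p)/p^k = O(1)$), and finally pass from $\sum(\log p)/p$ to $\sum 1/p$ by one further partial summation; the two approaches give the same answer, but the version above is slightly more direct.
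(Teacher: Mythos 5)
Your proof is correct, but it takes a genuinely different route from the paper's. The paper simply cites Montgomery--Vaughan, Corollary~4.12(c), which already gives the Mertens-type estimate
\begin{equation}
\sum_{\substack{p\leq x\\p\equiv a \mdsub{m}}}\frac{1}{p}=\frac{1}{\phi(m)}\log\log x
+O_m\Bigl(1+\sum_{\chi\neq\chi_0 \mdsub{m}}\log\left|L(1,\chi)\right|\Bigr),
\end{equation}
and then disposes of the character sum by noting that there are only finitely many characters $\md{m}$ and that $L(1,\chi)\neq0$ for each nontrivial $\chi$. You instead reconstruct the estimate from scratch: you start from the prime number theorem in arithmetic progressions (your appeal to Siegel--Walfisz is more than enough, since here $m$ is fixed and no uniformity in the modulus is required), apply Abel summation to pass from $\pi(t;m,a)$ to $\sum 1/p$, and evaluate $\int_2^x \li(t)\,t^{-2}\,dt=\log\log x+O(1)$ by one further integration by parts; each of these steps is carried out correctly, and the error integral $\int_2^x dt/(t(\log t)^2)$ is indeed bounded. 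Both arguments ultimately rest on the same deep fact, the nonvanishing of $L(1,\chi)$; the paper's proof is essentially a one-line citation of a packaged Mertens theorem for progressions, while yours exhibits the partial-summation deduction explicitly, which is more self-contained but a little longer.
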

\begin{proof}
Corollary 4.12(c) in Montgomery--Vaughan~\cite{mova} yields
\begin{equation}
\sum_{\substack{p\leq x\\p\equiv a \md{m}}}\frac{1}{p}=\frac{1}{\phi(m)}\log \log x+O_m\left(1+\sum_{\chi\neq \chi_0 \md{m}}\log \left|L(1,\chi)\right|\right),
\end{equation}
where the summation $\sum_{\chi\neq \chi_0 \md{m}}$ is taken over non--trivial characters $\md{m}$. Using the fact that there are finitely many such characters and that 
$L(1,\chi)\neq 0$ for each such character proves the lemma.
\end{proof}

\section{Lemmata}\label{lemmata}
In this section we introduce a certain arithmetical function $\rho_n(m)$ and provide an explicit expression when $m$ is square--free. We then obtain upper bounds for sums that involve this function. At the end of the section we give some auxiliary lemmata regarding sums that involve the multiplicative order function $\ell_p(n)$.

Let $n \in \N$ be a fixed positive integer, and define the function $\rho_n : \N \to \N $ via the rule
\begin{equation}\label{rho0} 
m		\mapsto		\left|	\left\{	a \ \md{m}:a^n \equiv 1 \ \md{m}	\right\} \right|.
\end{equation} 
It is a direct consequence of the Chinese Remainder Theorem that $\rho_n$ is multiplicative.
The verification of the following lemma is left to the reader.
\begin{lemma}\label{rho} 
For all primes $p$ and for all positive integers $n$ one has 
\begin{equation}\rho_n ( p )=\gcd\left(p-1,n\right).\end{equation}
\end{lemma}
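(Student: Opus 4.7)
The plan is to reduce the count to a question about a single cyclic group. The key observation is that the reduced residues modulo a prime $p$ form the cyclic group $(\Z/p\Z)^*$, which has order $p-1$. Once this is in hand, the claimed identity becomes a standard fact about solutions of $x^n=1$ in a cyclic group.

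First I would fix a primitive root $g$ modulo $p$, so that every element $a$ of $(\Z/p\Z)^*$ may be written uniquely as $a=g^k$ with $0\leq k\leq p-2$. Then the congruence $a^n\equiv 1 \md{p}$ translates, via taking discrete logarithms with base $g$, into $kn\equiv 0 \md{p-1}$. Setting $d\coloneqq \gcd(p-1,n)$, this is equivalent to requiring $\frac{p-1}{d}\mid k$. Counting the multiples of $\frac{p-1}{d}$ in $\{0,1,\dots,p-2\}$ yields exactly $d$ values, which gives $\rho_n(p)=d=\gcd(p-1,n)$ as required.

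There is no serious obstacle here: the only ingredients used are the cyclicity of $(\Z/p\Z)^*$ (equivalently, the existence of primitive roots modulo a prime, which is classical and can be cited from any standard text on elementary number theory) and a routine counting argument for multiples in a complete residue system. Consequently the verification is, as the authors note, essentially mechanical and could alternatively be phrased by invoking the general result that in a cyclic group $C$ of order $N$, the equation $x^n=1_C$ has exactly $\gcd(N,n)$ solutions.
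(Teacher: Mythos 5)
Your proof is correct and is the standard argument via cyclicity of $(\Z/p\Z)^*$ and discrete logarithms; the paper itself omits the proof, stating only that the verification is left to the reader, so there is no alternative approach to compare against.
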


\begin{lemma}\label{omeg}
One has the following bounds, valid for all $n \in \N$ and for all $x \geq 3$
\begin{enumerate}
\item 	\label{421}	$\sum\limits_{m \leq x} \mu^2(m)\frac{\rho_n(m)}{m}\ll_n (\log x)^{\tau(n)}$, 
\item 	\label{422}	$\sum\limits_{m>x} \mu^2(m)\frac{\rho_n(m)}{m^2} \log m\ll_n \dfrac{1}{x}(\log x)^{\tau(n)+1}$.
\end{enumerate}
\end{lemma}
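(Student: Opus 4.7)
The plan is to exploit the fact that $\mu^{2}(m)\rho_n(m)/m$ is a non-negative multiplicative function supported on square-free integers. Any such function satisfies the Euler-product majorisation
\begin{equation}
\sum_{m \leq x}\mu^{2}(m)\frac{\rho_n(m)}{m} \leq \prod_{p \leq x}\left(1+\frac{\rho_n(p)}{p}\right) = \prod_{p \leq x}\left(1+\frac{\gcd(p-1,n)}{p}\right),
\end{equation}
where the equality is Lemma~\ref{rho}. Taking logarithms and applying $\log(1+u)\leq u$, bound (i) reduces to estimating the prime sum $\sum_{p \leq x}\gcd(p-1,n)/p$.

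To handle this, I would use the identity $n=\sum_{d \mid n}\phi(d)$, applied to $\gcd(p-1,n)$, to reorganise the sum:
\begin{equation}
\sum_{p\leq x}\frac{\gcd(p-1,n)}{p} = \sum_{d \mid n}\phi(d)\sum_{\substack{p\leq x\\p\equiv 1\md{d}}}\frac{1}{p}.
\end{equation}
By Lemma~\ref{pt}, each inner sum equals $\phi(d)^{-1}\log\log x + O_d(1)$. Summing over the $\tau(n)$ divisors $d\mid n$ yields $\tau(n)\log\log x + O_n(1)$, and exponentiating gives $\prod_{p\leq x}(1+\gcd(p-1,n)/p)\ll_n(\log x)^{\tau(n)}$, which is (i).

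For (ii), put $S(t)\coloneqq\sum_{m \leq t}\mu^{2}(m)\rho_n(m)/m$, so that $S(t)\ll_n(\log t)^{\tau(n)}$ by part (i). Applying Abel summation to $\sum_{m>x}(\log m/m)\cdot(\mu^{2}(m)\rho_n(m)/m)$ and letting the upper endpoint tend to infinity (the boundary term $S(y)(\log y)/y$ vanishes since $(\log y)^{\tau(n)+1}/y\to 0$), one obtains the expression $-S(x)(\log x)/x+\int_{x}^{\infty}S(t)(\log t-1)/t^{2}\,dt$. Using the estimate $S(t)\ll_n(\log t)^{\tau(n)}$ together with the standard bound $\int_{x}^{\infty}(\log t)^{k}/t^{2}\,dt\ll(\log x)^{k}/x$, obtained by iterated integration by parts, both summands are $O_n((\log x)^{\tau(n)+1}/x)$.

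The only obstacle is of a bookkeeping nature, localised in step one: one must verify that the $O_d(1)$ errors from Lemma~\ref{pt} aggregate into a single $O_n(1)$ constant. This poses no real difficulty because $d$ ranges over the finite set of divisors of $n$, of which there are $\tau(n)$. Everything else, including part (ii), is routine once (i) is secured.
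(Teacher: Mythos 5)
Your proof of part~(i) is the paper's argument verbatim: the Euler-product majorisation for non-negative multiplicative functions supported on squarefree integers, the inequality $1+t\leq e^t$, the identity $\gcd(a,b)=\sum_{d\mid a,\, d\mid b}\phi(d)$ to reduce to primes in progressions, and Lemma~\ref{pt} to evaluate each inner sum; your remark that the $O_d(1)$ terms combine into $O_n(1)$ because $d$ ranges over the $\tau(n)$ divisors of $n$ is exactly the point that makes the constant depend on $n$ alone. For part~(ii) you depart from the paper: you apply Abel summation with $S(t)=\sum_{m\leq t}\mu^2(m)\rho_n(m)/m$ and the weight $(\log m)/m$, then invoke the elementary bound $\int_x^\infty (\log t)^k t^{-2}\,\mathrm{d}t \ll_k (\log x)^k/x$ (iterated integration by parts), whereas the paper decomposes $(x,\infty)$ into the blocks $(xe^i, xe^{i+1}]$, applies part~(i) on each, and sums a convergent series $\sum_{i\geq 0}(i+1)^{\tau(n)+1}e^{-i}$ after using $\log(ab)\leq(\log a)(\log b)$. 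Both routes are routine consequences of part~(i) and give the same bound; your version is arguably the more standard bookkeeping, while the paper's dyadic decomposition avoids any integral estimate. The argument is correct.
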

\begin{proof}
\ref{421} We begin by noticing that if $f:\N \to \R_{\geq 0}$ is a multiplicative function then 
\begin{equation}
\sum_{m\leq x} \mu^2(m)f(m) \leq \prod_{p\leq x} \left(1+f\left(p\right)\right).
\end{equation}
Setting $f(m)=\dfrac{\rho_n(m)}{m}$ in the above relation yields
\begin{equation}
\sum_{m\leq x} \mu^2(m)\frac{\rho_n(m)}{m}\leq\prod_{p\leq x} \left(1+\frac{\gcd\left(p-1,n\right)}{p}\right)
\end{equation}
for all $x \geq 3$. The inequality $1+t \leq e^t$, valid for any $t>0$, shows that 
\begin{equation}
\prod_{p\leq x} \left(1+\frac{\gcd\left(p-1,n\right)}{p}\right)\leq \exp\left(\sum_{p\leq x}\frac{\gcd\left(p-1,n\right)}{p}\right).
\end{equation}
Using the identity  
\begin{equation}
\label{selbergios} 
\gcd(a,b)=\sum_{\substack{d \mid a \\ d \mid b}}\phi(d),
\end{equation}
valid for all $a,b \in \N$, yields 
\begin{equation}
\sum_{p\leq x}\frac{\gcd\left(p-1,n\right)}{p} = \sum_{d \mid n}\phi(d)\left(\sum_{\substack{p\leq x\\ p\equiv 1 \md{d}}}\frac{1}{p}\right).
\end{equation}
Using Lemma~\ref{pt} for each inner sum gives 
\begin{equation}
\sum_{p\leq x}\frac{\gcd\left(p-1,n\right)}{p} = \tau(n)\log \log x+O_n(1),
\end{equation}
which proves that
\begin{equation}
\sum_{m\leq x} \mu^2(m)\frac{\rho_n(m)}{m}\ll_n(\log x)^{\tau(n)}.
\end{equation}
\ref{422} Splitting the range of summation in disjoint intervals gives
\begin{equation}
\sum\limits_{m>x}\mu^2(m)\frac{\rho_n(m)}{m^2} \log m = \sum_{i=0}^{\infty} \sum_{xe^i<m\leq x e^{i+1}} \mu^2(m)\frac{\rho_n(m)}{m^2} \log m.
\end{equation}
Noticing that
\begin{equation}
\sum_{xe^i<m\leq x e^{i+1}} \mu^2(m)\frac{\rho_n(m)}{m^2} \log m \leq \frac{\log(x e^{i+1})}{xe^i} \sum_{m\leq x e^{i+1}} \mu^2(m)\frac{\rho_n(m)}{m} 
\end{equation}
and applying part~\ref{421} yields
\begin{equation}
\sum\limits_{m>x} \mu^2(m)\frac{\rho_n(m)}{m^2} \log m \ll_n \frac{1}{x} \sum_{i=0}^{\infty}\frac{(\log(xe^{i+1}))^{\tau(n)+1}}{e^i}.
\end{equation}
Using the inequality $\log(ab) \leq (\log a)(\log b)$, valid for all $a , b > e$ gives
\begin{equation}
\sum \limits_{m>x} \mu^2(m)\frac{\rho_n(m)}{m^2} \log m \ll_n \frac{(\log x)^{\tau(n)+1}}{x} \sum_{i=0}^{\infty} \frac{(i+1)^{\tau(n)+1}}{e^i}.
\end{equation}
This proves the assertion of the lemma since the series 
\begin{equation}
\sum_{i=0}^{\infty} \frac{(i+1)^{\tau(n)+1}}{e^i}
\end{equation}
is convergent.
\end{proof}
We record here for future reference a familiar lemma which allows us to translate 
information about the asymptotic behaviour of weighted sums by $\Lambda(k)$ into 
one regarding unweighted sums.
\begin{lemma}\label{gyros}
Let $b_k$ be a sequence of real numbers and define for any $x \in \R_{>1}$,
\begin{equation}
\beta(x) \coloneqq \max\left\{ \left| b_k \right| : 1\leq k \leq x\right\}.
\end{equation}
Then one has the following estimates.
\begin{enumerate}
\item 	\label{431}	For any $x \in \R_{\geq 2}$, $\sum_{q \leq x}b_q=\sum_{p \leq x}b_p+O\left(\sqrt{x}\dfrac{\beta(x)}{\log x}\right)$.
\item		\label{432} 	Let $c$, $A$ be positive constants such that $\sum_{p\leq x}b_p \log p = cx+O\left(\dfrac{x}{(\log x)^A}\right)$, for all $x \geq 3$. Then 
					$\sum_{p \leq x} b_p=c \li(x)+O\left(\dfrac{x}{(\log x)^{A}}\right)$, for all $x \geq 2$.
\end{enumerate}
\end{lemma}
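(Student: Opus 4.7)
The plan is to address the two parts separately by very classical means.

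For part~\ref{431}, I would simply decompose the sum over prime powers $q\le x$ into the contribution of the primes and that of the higher prime powers, writing
\begin{equation}
\sum_{q \le x} b_q = \sum_{p \le x} b_p + \sum_{k=2}^{\lfloor \log_2 x\rfloor} \sum_{p \le x^{1/k}} b_{p^k}.
\end{equation}
Every term $b_{p^k}$ in the double sum is bounded in absolute value by $\beta(x)$, and the higher prime powers are counted using the Chebyshev estimate $\pi(y)\ll y/\log y$. The contribution of $k=2$ is $\pi(\sqrt{x})\,\beta(x)\ll \sqrt{x}\,\beta(x)/\log x$, and the remaining terms, with $k\ge 3$, contribute at most $(\log_2 x)\,x^{1/3}\beta(x)$, which is absorbed into the same order.

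For part~\ref{432}, the natural approach is Abel summation. Setting $S(t):=\sum_{p\le t} b_p\log p$ and pairing it with the decreasing weight $1/\log t$, I obtain
\begin{equation}
\sum_{p \le x} b_p = \frac{S(x)}{\log x} + \int_2^x \frac{S(t)}{t (\log t)^2}\,\mathrm{d}t.
\end{equation}
I would then substitute the hypothesis $S(t)=ct+O(t/(\log t)^A)$ and use the identity
\begin{equation}
\li(x) = \frac{x}{\log x} - \frac{2}{\log 2} + \int_2^x \frac{\mathrm{d}t}{(\log t)^2},
\end{equation}
which follows immediately from integration by parts. The main term of the resulting expression collects as $c\,\li(x)+O(1)$. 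The error splits into a boundary contribution of size $x/(\log x)^{A+1}$ and the integral $\int_2^x \mathrm{d}t/(\log t)^{A+2}$, which is handled in the standard way by splitting the range at $\sqrt{x}$.

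Neither step presents a real obstacle; the only piece of genuine care is the split-integral bound for $\int_2^x \mathrm{d}t/(\log t)^{A+2}$, which gives $O(\sqrt{x}+x/(\log x)^{A+2})$ and therefore fits inside the claimed $O(x/(\log x)^A)$. The extension to the range $x\in[2,3]$, where the hypothesis is not stated, is automatic because both sides are bounded there.
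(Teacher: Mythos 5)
Your argument is essentially identical to the paper's: for part~\ref{431} you make the same decomposition by the power $k$, isolate $k=2$ via $\pi(\sqrt{x})\ll\sqrt{x}/\log x$ and absorb the $k\ge 3$ terms, and for part~\ref{432} you use the same Abel summation against $1/\log t$, the same $\li(x)$ identity, and the same split of $\int_2^x (\log t)^{-A-2}\,\mathrm{d}t$ at $\sqrt{x}$. The proof is correct and matches the paper in both structure and estimates.
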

\begin{proof}
\ref{431} Define $k_0 \coloneqq \left[\frac{\log x}{ \log 2}\right]$. We partition the sum $\sum_{q \leq x} b_q$ according to the values of prime powers that $q$ assumes and this yields
\begin{equation} 
\sum_{q \leq x} b_q 		= 	\sum_{k=1}^{k_0}		\sum_{p \leq x^{\frac{1}{k}}} b_{p^k}.
\end{equation}
Now notice that 
\begin{equation}
\sum_{k=2}^{k_0}\sum_{p \leq x^{\frac{1}{k}}} b_{p^k} \ll \beta(x)\left[\pi\left(\sqrt{x}\right)+\left(k_0-2\right) \pi\left(x^{\frac{1}{3}}\right)\right],
\end{equation}
and use the bound $\pi(t) \ll \frac{t}{\log t}$, valid for all $t \in \R_{\geq 2}$, to get
\begin{equation}
\sum_{k=2}^{k_0}\sum_{p \leq x^{\frac{1}{k}}} b_{p^k}\ll \sqrt{x}\frac{\beta(x)}{\log x}.
\end{equation}
The claim follows.

\ref{432} Applying partial summation yields 
\begin{equation}
\sum_{p\leq x}b_p=c \li(x)+O\left(\frac{x}{(\log x)^{A+1}}+\int_2^x\frac{\mathrm{d}t}{(\log t)^{A+2}}\right).
\end{equation}
The use of the following inequalities
\begin{equation}
\int_2^{\sqrt{x}}	\frac{\mathrm{d}t}{(\log t)^{A+2}}		\leq 	\frac{\sqrt{x}}{(\log 2)^{A+2}}, \ \ \ \							
\int_{\sqrt{x}}^{x}	\frac{\mathrm{d}t}{(\log t)^{A+2}}		\leq 	\frac{x-\sqrt{x}}{\left(\log \left(\sqrt{x}\right)\right)^{A+2}},
\end{equation}
concludes the proof of the lemma.
\end{proof}
In the following lemma we record auxiliary bounds that will be needed when we deal with cases~\ref{112}~and~\ref{113} of Theorem~\ref{main}. 
\begin{lemma}\label{nerdos}
Let $p$ be a fixed prime and $d \in \mathbb{N}$. One has the following bounds, uniformly for all $x \geq 1$.
\begin{enumerate}
\item\label{441} 	$\displaystyle\sum_{k \leq x}\Osum_{\substack{m \in \N\\ l_p(m)=kd}}\frac{1}{m}\ll\log (x d \log p)$,
\item\label{442} 	$\displaystyle\sum_{k \geq x}\frac{1}{k}\Osum_{\substack{m \in \N\\ l_p(m)=kd}}\frac{1}{m}\ll\frac{\log (  x d \log p)}{x}$,
\item\label{443} 	the series $\p(p,r)$ defined in~\eqref{ff} converges for each prime $p$ and each $r \in \N$. Further, one has 
				\begin{equation}
				\p(p,r) \ll \tau(r) \log (r \log p),
				\end{equation}
				with an absolute implied constant.
\end{enumerate}
\end{lemma}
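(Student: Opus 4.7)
My plan is to prove the three parts in order: part~\ref{441} is the essential input, from which part~\ref{442} follows by a dyadic decomposition and part~\ref{443} by a standard $\gcd$-identity. The driving observation throughout is that every $m$ coprime to $p$ with $\ell_p(m)=t$ satisfies $m\mid p^t-1$, so $1/m$-sums restricted to a fixed order can be controlled via divisor sums of $p^t-1$.

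For part~\ref{441}, I would note that every $m$ on the left-hand side has $\ell_p(m)=kd$ for some integer $1\le k\le\lfloor x\rfloor$, and therefore divides the single integer
$$
L:=\operatorname{lcm}_{1\le k\le\lfloor x\rfloor}(p^{kd}-1).
$$
This yields the key majorisation
$$
\sum_{k\le x}\Osum_{\ell_p(m)=kd}\frac{1}{m}\le \sum_{m\mid L}\frac{1}{m}=\frac{\sigma(L)}{L}.
$$
Since $L\mid\prod_{k\le\lfloor x\rfloor}(p^{kd}-1)$, one has $\log L\ll dx^2\log p$, and the standard estimate $\sigma(n)/n\ll 1+\log\log n$ then gives $\sigma(L)/L\ll\log\log L\ll\log(xd\log p)$, the surplus factor $\log x$ inside the double logarithm being absorbed into the outer logarithm (bounded cases being handled trivially). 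Part~\ref{442} follows by partitioning $k\in[x,\infty)$ into dyadic blocks $[2^jx,2^{j+1}x)$ for $j\ge 0$, estimating $1/k\le 1/(2^jx)$ on each block, and applying part~\ref{441} at $2^{j+1}x$:
$$
\sum_{k\ge x}\frac{1}{k}\Osum_{\ell_p(m)=kd}\frac{1}{m}\ll \frac{1}{x}\sum_{j\ge 0}\frac{\log(2^{j+1}xd\log p)}{2^j}\ll \frac{\log(xd\log p)}{x},
$$
the convergence of $\sum_{j}(j+1)/2^j$ absorbing the accumulation across blocks.

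For part~\ref{443}, I would apply identity~\eqref{selbergios} to write $\gcd(\ell_p(m),r)=\sum_{e\mid r,\;e\mid\ell_p(m)}\phi(e)$; interchanging summation (to be justified below) and setting $k=\ell_p(m)/e$ transforms the series into
$$
\p(p,r)=\sum_{e\mid r}\frac{\phi(e)}{e}\sum_{k\ge 1}\frac{1}{k}\Osum_{\ell_p(m)=ke}\frac{\mu(m)}{m}.
$$
Passing to absolute values, bounding $|\mu(m)|\le 1$, and invoking part~\ref{442} at $x=1$ produces $\ll\log(e\log p)$ per $e\mid r$; summing over $e\mid r$ using $\phi(e)/e\le 1$ yields $|\p(p,r)|\ll\tau(r)\log(r\log p)$, and this same bookkeeping establishes the absolute convergence that legitimises the earlier interchange. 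The main obstacle is concentrated in part~\ref{441}: a term-by-term estimate $\sigma(p^{kd}-1)/(p^{kd}-1)\ll\log(kd\log p)$ applied to each $\Osum$ separately would accumulate a fatal factor of $x$ upon summation over $k$, and only the $\operatorname{lcm}$ trick described above packages the relevant $m$ into divisors of a single integer $L$ so that one pays only a double logarithm of $L$ rather than a linear term.
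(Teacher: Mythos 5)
Your proof is correct and follows essentially the same route as the paper: in part~\ref{441} both arguments pack the relevant $m$ into divisors of a single auxiliary integer (you take the $\operatorname{lcm}$, the paper takes the product $E_p(x,d)=\prod_{k\le x}(p^{kd}-1)$, with the same bound $\ll\log\log(p^{dx^2})$); in part~\ref{442} you use a dyadic block decomposition where the paper uses partial summation, which are interchangeable here; and in part~\ref{443} both apply the $\gcd$ identity~\eqref{selbergios}, interchange summation, and invoke part~\ref{442} at $x=1$. These are cosmetic variations of the same argument.
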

\begin{proof}
\ref{441} The integers $m$ taken into account in the inner sum satisfy $p^{kd}\equiv 1 \md{m}$ for some $k\leq x$. Therefore each such $m$ is a divisor of 
\begin{equation}
E_p(x,d) \coloneqq \prod_{k\leq x}(p^{kd}-1).
\end{equation}
Hence the double sum is at most
\begin{equation}
\sum_{m \mid E_p(x)}\frac{1}{m} = \frac{\sigma\left(E_p(x,d)\right)}{E_p(x,d)}.
\end{equation}
We now use the well--known bound 
\begin{equation}
\frac{\sigma(n)}{n}\ll \log \log n
\end{equation}
to deduce that the double sum is 
\begin{equation}
\ll \log \log E_p(x,d).
\end{equation}
One easily sees that 
\begin{equation}
E_p(x,d) \leq \prod_{k\leq x}p^{kd} \leq p^{dx^2},
\end{equation}
which shows that the double sum is $\ll \log \log (p^{dx^{2}}) \ll \log (x d \log p)$, as asserted.

\ref{442} The term corresponding to $k=x$ makes a contribution only when $x$ is an integer,
in which case we get a contribution which is $\ll \frac{1}{x}\log(xd \log p)$, as shown by the first 
part of this lemma. It remains to examine the contribution made by the terms corresponding to $k>x.$
Using partial summation along with part~\ref{441} we deduce that
\begin{equation}
\sum_{k > x}\frac{1}{k}\Osum_{\substack{m \in \N \\ l_p(m)=kd}}\frac{1}{m} \ll \frac{\log(xd\log p)}{x} + \int_x^{\infty}\frac{\log(ud\log p )}{u^2}\mathrm{d}u.
\end{equation}
Alluding to the estimate $\int_{x}^{\infty}(\log u)u^{-2} \mathrm{d}u \ll \log (2x) x^{-1}$, valid for all $ x \geq 1$,
proves our claim.

\ref{443} The identity~\eqref{selbergios}
and 
\begin{equation}
\p(p,r) = \sum_{k=1}^{\infty} \frac{\gcd(k,r)}{k} \Osum_{\substack{m \in \N \\ \ell_p(m)=k}} \frac{\mu(m)}{m}
\end{equation}
show that one has
\begin{equation}
\p(p,r) 
=\sum_{d|r}\frac{\phi(d)}{d}
\sum_{k=1}^{\infty} \frac{1}{k}
\Osum_{\substack{m \in \N \\ \ell_p(m)=kd}} \frac{\mu(m)}{m},
\end{equation}
which is bounded in absolute value by
\begin{equation}
\sum_{d|r}
\sum_{k=1}^{\infty} \frac{1}{k}
\Osum_{\substack{m \in \N \\ \ell_p(m)=kd}} \frac{1}{m}.
\end{equation}
Using
$x=1$ in part~\ref{442} concludes the proof of part~\ref{443}.
\end{proof}
\section{Proof of Theorem~\ref{main}}\label{mainproof}
In this section we prove Theorem~\ref{main}. We begin by proving the auxiliary Lemmata~\ref{psi},~\ref{range}, and~\ref{asympt}
and use them in succession to provide the proof of Theorem~\ref{main}~\ref{111}. We then prove Lemma~\ref{phi} from which we 
deduce the validity of Theorem~\ref{main}~\ref{112} and Theorem~\ref{main}~\ref{113}.

For fixed $n,m\in \N$ and $x\in \R_{>1}$, define the following functions 
\begin{equation}
\label{psi0} 
\Psi_n(x)\coloneqq\sum_{k \leq x}\Lambda(k) \frac{\phi(k^n-1)}{k^n-1},
\end{equation}
and 
\begin{equation}\label{psi1} 
\Psi_n(x;m)\coloneqq
\sum_{\substack{a \mdsub m \\ a^n\equiv 1 \mdsub m}} \psi(x;m,a),
\end{equation}
where $\psi$ was defined in~\eqref{psidefinition}.
\begin{lemma}\label{psi}
For all naturals $n \geq 1$, and for all $x \in \R_{>1}$ one has 
\begin{equation}
\Psi_n(x)=\sum_{m \leq x^n}\frac{\mu(m)}{m} \Psi_n(x;m).
\end{equation}
\end{lemma}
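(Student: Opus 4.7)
The plan is to invert the fraction $\phi(k^n-1)/(k^n-1)$ using the standard Möbius identity
\begin{equation}
\frac{\phi(N)}{N}=\sum_{m\mid N}\frac{\mu(m)}{m},
\end{equation}
applied to $N=k^n-1$, and then to interchange the order of summation. Substituting into the definition of $\Psi_n(x)$ gives
\begin{equation}
\Psi_n(x)=\sum_{k\leq x}\Lambda(k)\sum_{m\mid k^n-1}\frac{\mu(m)}{m}.
\end{equation}
The contribution of $k=1$ is harmless since $\Lambda(1)=0$, so we may restrict to $k\geq 2$; for such $k$, any divisor $m$ of $k^n-1$ satisfies $m\leq k^n-1\leq x^n-1<x^n$, giving the required truncation of the outer $m$-range.

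Next I would swap the summations. After the swap the inner sum is indexed by $k\leq x$ subject to $k^n\equiv 1\pmod m$. Partitioning these $k$ according to their residue class $a\bmod m$, the congruence becomes $a^n\equiv 1\pmod m$, which automatically forces $\gcd(a,m)=1$ (so that $\psi(x;m,a)$ is well defined in the sense of~\eqref{psidefinition}). Thus
\begin{equation}
\sum_{\substack{k\leq x\\ k^n\equiv 1\,\mdsub{m}}}\Lambda(k)
=\sum_{\substack{a\,\mdsub{m}\\ a^n\equiv 1\,\mdsub{m}}}\,\sum_{\substack{k\leq x\\ k\equiv a\,\mdsub{m}}}\Lambda(k)
=\sum_{\substack{a\,\mdsub{m}\\ a^n\equiv 1\,\mdsub{m}}}\psi(x;m,a)=\Psi_n(x;m),
\end{equation}
and assembling the pieces yields the claimed identity.

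There is no serious obstacle here; the entire argument is a formal manipulation of Dirichlet-type convolutions. The only points requiring care are the handling of the degenerate term $k=1$ (trivial because $\Lambda(1)=0$, so one never has to make sense of $m\mid 0$) and the verification that the support of $m$ after swapping is contained in $[1,x^n]$, both of which are immediate from the observation above.
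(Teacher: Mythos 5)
Your proof is correct and takes exactly the same approach as the paper's: apply the M\"obius identity $\phi(N)/N=\sum_{m\mid N}\mu(m)/m$ with $N=k^n-1$ and interchange the order of summation. The paper's proof is a one-liner citing these two steps; you have simply written out the details (the $k=1$ degeneracy, the bound $m\leq x^n$, and the regrouping by residue class) which the paper leaves implicit.
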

\begin{proof}
The proof follows readily by noticing that 
\begin{equation}\label{penguin}
\frac{\phi(k)}{k}=\sum\limits_{m \mid k}\frac{\mu(m)}{m},
\end{equation} 
and inverting the order of summation.
\end{proof}

\begin{lemma}\label{range} 
For any fixed constant $A>0$ and any fixed $n \in \N$, one has uniformly for all $x\geq 3$
\begin{equation}\label{five}
\sum_{\sqrt{x}/\left(\log x\right)^{A} < m \leq x^n} \frac{\mu(m)}{m} \Psi_n(x;m)  \ll_{A,n} \sqrt{x} \left(\log x\right)^{2+\tau(n)+A}.
\end{equation}
\end{lemma}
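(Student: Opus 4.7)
The plan is to bound $\Psi_n(x;m)$ by a crude counting argument and then invoke Lemma~\ref{omeg}. Unfolding the definition~\eqref{psi1}, the residue classes $a \md{m}$ satisfying $a^n \equiv 1 \md{m}$ are disjoint and number exactly $\rho_n(m)$, so for any $m \geq 2$ one has
\[
\Psi_n(x;m) = \sum_{\substack{2 \leq k \leq x \\ k^n \equiv 1 \md{m}}} \Lambda(k) \leq (\log x)\,\bigl|\{k \leq x : k^n \equiv 1 \md{m}\}\bigr| \leq (\log x)\, \rho_n(m)\!\left(\frac{x}{m}+1\right),
\]
using $\Lambda(k) \leq \log x$ together with the fact that each such residue class contains at most $\lfloor x/m \rfloor + 1$ integers up to $x$.

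Substituting this into the left-hand side of~\eqref{five} and separating the two resulting pieces, I would reduce the question to bounding, with $y := \sqrt{x}/(\log x)^{A}$, the quantities
\[
x(\log x)\!\!\sum_{y < m \leq x^n}\!\! \frac{\mu^2(m)\rho_n(m)}{m^2} \quad \text{and} \quad (\log x)\!\!\sum_{y < m \leq x^n}\!\! \frac{\mu^2(m)\rho_n(m)}{m}.
\]
The second is majorised via Lemma~\ref{omeg}\ref{421} by $(\log x)\cdot(\log x^n)^{\tau(n)} \ll_n (\log x)^{\tau(n)+1}$, which is negligible for the claim. For the first, I would exploit $\log m > \log y$ throughout the range to write
\[
\sum_{m > y} \frac{\mu^2(m)\rho_n(m)}{m^2} \leq \frac{1}{\log y}\sum_{m > y} \frac{\mu^2(m) \rho_n(m) \log m}{m^2} \ll_n \frac{(\log y)^{\tau(n)}}{y}
\]
by Lemma~\ref{omeg}\ref{422}. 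Since $\log y \ll \log x$ and $1/y = (\log x)^{A}/\sqrt{x}$, the first piece contributes $\ll_n \sqrt{x}\,(\log x)^{\tau(n)+A+1}$, which dominates the second piece and in fact slightly improves on the bound in~\eqref{five}.

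The only step requiring any thought is the trivialisation of $\Psi_n(x;m)$ in the first display: once one accepts that the bound $\Lambda(k)\leq\log x$ combined with the elementary count of solutions to $k^n \equiv 1 \md{m}$ suffices in this range, Lemma~\ref{omeg} does all the work. No harmonic analysis or appeal to the Bombieri--Vinogradov theorem is needed for large moduli — those are reserved for the complementary range $m \leq y$, which will produce the main term in the proof of Theorem~\ref{main}\ref{111}.
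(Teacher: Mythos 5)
Your argument is correct, and it takes a cleaner route than the paper. The paper splits the range of moduli into $(\sqrt{x}/(\log x)^A,\,x]$ and $(x,\,x^n]$ and applies a \emph{different} trivial bound for $\psi(x;m,a)$ in each subrange: $\psi(x;m,a)\leq\log x$ when $m>x$ (at most one $k$ per class), and $\psi(x;m,a)\leq 2(x/m)\log x$ when $m\leq x$. You instead use the single uniform bound $\Psi_n(x;m)\leq(\log x)\rho_n(m)(x/m+1)$ valid throughout, and split the resulting sum into a ``$x/m$'' piece and a ``$+1$'' piece; these are then dispatched by Lemma~\ref{omeg}\ref{422} and Lemma~\ref{omeg}\ref{421} respectively, playing exactly the roles the paper assigns them. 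A bonus of your version is the device of inserting $\log m/\log y\geq 1$ before applying Lemma~\ref{omeg}\ref{422}, rather than the crude $1\leq\log m$ that the paper implicitly uses: this shaves one power of $\log x$ and gives $\sqrt{x}(\log x)^{\tau(n)+A+1}$ in place of the stated $\sqrt{x}(\log x)^{\tau(n)+A+2}$. (As in the paper, the passage $\sum_{a}\psi(x;m,a)=\sum_{k\leq x,\,k^n\equiv 1\,(m)}\Lambda(k)$ and the harmless treatment of small $x$ --- where $y$ may drop below $1$ --- are left implicit, but both are routine and absorbed by the $A,n$-dependent implied constant.)
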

\begin{proof}We break the summation over $m$
into the two disjoint intervals
$[\sqrt{x}/\left(\log x\right)^{A} ,x]$
and $\left(x,x^n\right]$. We first deal with the contribution afforded by the latter interval.
We claim that for $m > x$ one has $\psi(x;m,a) \leq \log x$. To see why, note 
that there exists at most one $k$ in $\left[1,x \right]$ such that $k \equiv a \md m$, 
and notice that $\Lambda(k) \leq \log k$, for all $k \in \N$, with equality if 
and only if $k$ is prime. Thus $\psi(x;m,a) \leq \log x$, as wanted.
Recalling the definition of $\rho_n(m)$, given in equation \eqref{rho0},
we get for $m > x$,
\begin{equation}
\Psi_n(x;m) = \sum_{\substack{a\md{m} \\a^n \equiv 1 \md{m}}}\psi(x;m,a) \leq \rho_n(m) \log x.
\end{equation}
Therefore 
\begin{equation}
\left|\sum_{x < m \leq x^n}\frac{\mu(m)}{m}\Psi_n(x;m)\right| \leq \log x \sum_{m \leq x^n} \mu^2(m)\frac{\rho_n(m)}{m}.
\end{equation}
We use the first part of Lemma~\ref{omeg} to conclude that 
\begin{equation}
\left|\sum_{x < m \leq x^n}\frac{\mu(m)}{m}\Psi_n(x;m)\right| \ll_n \left(\log x\right)^{1+\tau(n)}.
\end{equation} 
We proceed by estimating the contribution inherited from $m$
in the range 
$(\sqrt{x}/\left(\log x\right)^{A} ,x]$.
Since 
\begin{equation} 
\sum_{\substack{k \leq x \\ k \equiv a \md{m}}}1 \leq \left[\frac{x}{m}\right]+1 \leq 2\frac{x}{m} 
\end{equation}
for $m \leq x$, we see that
\begin{equation}
\psi(x;m,a)=\sum_{\substack{k\leq x \\ k \equiv a \md{m}}}\Lambda(k)\leq\log x\sum_{\substack{k\leq x \\ k \equiv a \md{m}}}1\leq2 \frac{x}{m}\log x.
\end{equation}
Therefore
\begin{align}
\left|\sum_{\sqrt{x}/\left(\log x\right)^A < m \leq x}\frac{\mu(m)}{m}\Psi_n(x;m)\right|										&\leq 
\sum_{\sqrt{x}/\left(\log x\right)^A < m \leq x}\frac{\mu^2(m)}{m}\sum_{\substack{a\md{m} \\ a^n \equiv 1 \md{m}}}\psi(x;m,a)	\\
																											&\leq
2x \log x\sum_{\sqrt{x}/\left(\log x\right)^A < m \leq x}\mu^2(m)\frac{\rho_n(m)}{m^2}.
\end{align}
Using the second part of Lemma \eqref{omeg} we get 
\begin{equation}
x \log x \sum_{m>\sqrt{x}/\left(\log x\right)^{A}}\mu^2(m)\frac{\rho_n(m)}{m^2} \ll_{n,A}\sqrt{x}\left(\log x\right)^{\tau(n)+A+2},
\end{equation}
thus completing the proof.
\end{proof}

\begin{lemma}\label{asympt} 
For any fixed constant $A>0$ one has uniformly for all $x\geq 3$
\begin{equation}
\sum_{m \leq \sqrt{x}/\left(\log x\right)^{A} }\frac{\mu\left(m\right)}{m} \Psi_n\left(x;m\right) = n \p_n x+ O_{n,A} \left(\frac{x}{\left(\log x\right)^{A-5}}\right).
\end{equation}
\end{lemma}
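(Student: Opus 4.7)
The plan is to separate the main term from the error by invoking the Bombieri--Vinogradov Theorem. Write, for each admissible pair $(m,a)$,
\begin{equation}
\psi(x;m,a) = \frac{x}{\phi(m)} + r(x;m,a),
\end{equation}
so that
\begin{equation}
\sum_{m \leq M}\frac{\mu(m)}{m}\Psi_n(x;m) = x\sum_{m\leq M}\frac{\mu(m)\rho_n(m)}{m\phi(m)} + \sum_{m\leq M}\frac{\mu(m)}{m}\sum_{\substack{a\mdsub m \\ a^n \equiv 1 \mdsub m}} r(x;m,a),
\end{equation}
with $M \coloneqq \sqrt{x}/(\log x)^A$. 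For the error, since $a^n \equiv 1 \md m$ forces $\gcd(a,m)=1$, one has $\rho_n(m) \leq \phi(m) \leq m$, so $\rho_n(m)/m \leq 1$, and the error term is bounded in absolute value by
\begin{equation}
\sum_{m \leq M} \frac{\rho_n(m)}{m} \max_{(a,m)=1}\bigl|\psi(x;m,a) - x/\phi(m)\bigr| \leq \sum_{m\leq M}\max_{(a,m)=1}\bigl|\psi(x;m,a)-x/\phi(m)\bigr|,
\end{equation}
which by Theorem~\ref{Bombieri--Vinogradov} is $\ll_A x/(\log x)^{A-5}$.

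For the main term, the function $m \mapsto \mu(m)\rho_n(m)/(m\phi(m))$ is multiplicative (by Lemma~\ref{rho} and multiplicativity of $\mu$, $\phi$, $\rho_n$), supported on square-free $m$, and on primes takes the value $-\gcd(p-1,n)/(p(p-1))$. Because $\sum_p \gcd(p-1,n)/(p(p-1))$ converges (as noted in the introduction), the series extends to an absolutely convergent Euler product:
\begin{equation}
\sum_{m=1}^{\infty}\frac{\mu(m)\rho_n(m)}{m\phi(m)} = \prod_p\left(1 - \frac{\gcd(p-1,n)}{p(p-1)}\right) = n\p_n.
\end{equation}
Hence it remains to estimate the tail $x \sum_{m>M} \mu^2(m)\rho_n(m)/(m\phi(m))$. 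Using the classical bound $m/\phi(m) \ll \log\log m \leq \log m$ valid for $m \geq 3$, we may replace $1/\phi(m)$ by $\log m /m$ at the cost of an absolute constant, and then invoke Lemma~\ref{omeg}~\ref{422} to conclude
\begin{equation}
\sum_{m>M}\frac{\mu^2(m)\rho_n(m)}{m\phi(m)} \ll \sum_{m>M}\frac{\mu^2(m)\rho_n(m)\log m}{m^2} \ll_n \frac{(\log M)^{\tau(n)+1}}{M}.
\end{equation}
Multiplying by $x$ and substituting $M = \sqrt{x}/(\log x)^A$ yields a contribution $\ll_{n,A} \sqrt{x}(\log x)^{\tau(n)+A+1}$, which is absorbed by $O_{n,A}(x/(\log x)^{A-5})$ for $x$ sufficiently large.

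The main obstacle is arguably bookkeeping rather than substance: one must verify that the factor $\rho_n(m)/m$ (which is crucial for exploiting Bombieri--Vinogradov in the ``untwisted'' form) is indeed harmless in the error term, and that the replacement of $1/\phi(m)$ by $\log m/m$ in the tail does not spoil the estimate of Lemma~\ref{omeg}~\ref{422}. Both are straightforward once the structural decomposition above is in place, and combining the three estimates delivers the claimed asymptotic.
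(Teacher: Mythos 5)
Your proposal is correct and mirrors the paper's proof essentially step for step: the same decomposition of $\psi(x;m,a)$ into main term plus remainder, the same use of $\rho_n(m)/m \leq 1$ to reduce the error to the Bombieri--Vinogradov sum, and the same treatment of the main term by extending the truncated sum to the full Euler product $n\p_n$ and bounding the tail via $m/\phi(m) \ll \log m$ together with Lemma~\ref{omeg}~\ref{422}. The only cosmetic difference is that you derive $\rho_n(m) \leq m$ via the slightly sharper observation $\rho_n(m) \leq \phi(m)$, whereas the paper invokes the trivial bound directly.
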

\begin{proof}
By the definition of $\Psi_n\left(x;m\right)$ we have
\begin{equation}
\sum_{m \leq \sqrt{x}/\left(\log x\right)^{A} }\frac{\mu\left(m\right)}{m} \Psi_n\left(x;m\right)		=
\sum_{m \leq \sqrt{x}/\left(\log x\right)^{A} }\frac{\mu\left(m\right)}{m}\sum_{\substack{a\md{m} \\ a^n \equiv 1 \md{m}}} \psi\left(x;m,a\right),
\end{equation}
which equals
\begin{align}
&\sum_{m \leq \sqrt{x}/\left(\log x\right)^{A}}\frac{\mu\left(m\right)}{m}\sum_{\substack{a\md{m}\\a^n\equiv 1 \md{m}}} \left(\psi\left(x;m,a\right)-\frac{x}{\phi\left(m\right)}\right)\\+x
&\sum_{m \leq \sqrt{x}/\left(\log x\right)^{A} }\frac{\mu\left(m\right)}{m}\sum_{\substack{a\md{m} \\ a^n\equiv 1 \md{m}}} 
\frac{1}{\phi\left(m\right)}\\
&=\mathcal{E}+x\mathcal{M}, \text{say.}
\end{align}
Recalling the definition of $\rho_n\left(m\right)$ (equation~\eqref{rho0}),
one has 
\begin{equation}
\label{abcd}
\left|\mathcal{E}\right| \leq \sum_{m \leq \sqrt{x}/\left(\log x\right)^{A} }
\frac{\rho_n\left(m\right)}{m}\max_{\substack{a\md{m} \\\left(a,m\right)=1}} \left|\psi\left(x;m,a\right)-\frac{x}{\phi\left(m\right)}\right|.
\end{equation}
Now notice that by the definition of $\rho_n\left(m\right)$ one trivially has $\rho_n\left(m\right)\leq m$. Thus inequality \eqref{abcd} becomes
\begin{equation}
\left|\mathcal{E}\right|	\leq	\sum_{m \leq \sqrt{x}/\left(\log x\right)^{A} }
\max_{\substack{a\mdsub{m} \\ \left(a,m\right)=1}} 
\left|\psi\left(x;m,a\right)-\frac{x}{\phi\left(m\right)}\right|
\ll_A
\frac{x}{\left(\log x\right)^{A-5}},
\end{equation}
where a use of Theorem~\ref{Bombieri--Vinogradov} has been made. For the other term we get
\begin{equation}
\mathcal{M}	=	\sum_{m \leq \sqrt{x}/\left(\log x\right)^{A} }\frac{\mu\left(m\right)}{m}  \frac{\rho_n\left(m\right)}{\phi\left(m\right)}.
\end{equation}
We will show that this series converges. To that end, let us bound the tail of the series as follows. Using the inequality 
\begin{equation}
\phi\left(m\right)	\gg \frac{m}{\log m},
\end{equation}
valid for all $m\geq 2$, we deduce that
\begin{equation}
\left|	\sum_{m > \sqrt{x}/\left(\log x\right)^{A} }\frac{\mu\left(m\right)}{m}  \frac{\rho_n\left(m\right)}{\phi\left(m\right)}\right|	
\ll	
\sum_{m > \sqrt{x}/\left(\log x\right)^{A} } 
\mu^2(m)
\frac{\rho_n\left(m\right)}{m^2}\log m.
\end{equation}
By part~\ref{422} of Lemma~\ref{omeg}
\begin{equation}
\sum_{m > \sqrt{x}/\left(\log x\right)^{A} }   
\mu^2(m)
\frac{\rho_n\left(m\right)}{m^2}\log m
\ll_n
\frac
{\left(\log\left(
\frac{\sqrt{x}}{\left(\log x\right)^A}\right)
\right)^{\tau(n)+1}}
{\frac{\sqrt{x}}{\left(\log x\right)^A}}.
\end{equation}
This in turn is bounded by
\begin{equation}
\frac{\left(\log x\right)^{\tau(n)+A+1}}{\sqrt{x}},
\end{equation}
which tends to 0 as $x \to \infty$.
We may therefore write 
\begin{equation}
\mathcal{M}=
\sum_{m=1}^{\infty}\frac{\mu\left(m\right)}{m}
\frac{\rho_n\left(m\right)}{\phi\left(m\right)}
\ -
\sum_{m>\sqrt{x}/\left(\log x\right)^A}
\frac{\mu\left(m\right)}{m}
\frac{\rho_n\left(m\right)}{\phi\left(m\right)},
\end{equation}
which, by the preceding bound, equals
\begin{equation}
\sum_{m=1}^{\infty}\frac{\mu\left(m\right)}{m}
\frac{\rho_n\left(m\right)}{\phi\left(m\right)}+
O_n\left(\frac{\left(\log x\right)^{\tau(n)+A+1}}{\sqrt{x}}\right).
\end{equation}
Notice that the function $\frac{\mu\left(m\right)}{m}\frac{\rho_n\left(m\right)}{\phi\left(m\right)}$
is multiplicative, being the product of multiplicative functions.
We may thus use Euler products to deduce that 
\begin{equation}
\sum_{m=1}^{\infty}
\frac{\mu\left(m\right)}{m}
\frac{\rho_n\left(m\right)}{\phi\left(m\right)}=
\prod_{p}
\left(
\sum_{k=0}^{\infty}
\frac{\mu\left(p^k\right)}{p^k}
\frac{\rho_n\left(p^k\right)}{\phi\left(p^k\right)}
\right).
\end{equation} 
Recall the definition of $\p_n$ (equation~\eqref{product}), as well as the fact that
$\mu\left(p^k\right)=0$ for $k\geq 2$. 
Hence
\begin{align}
\sum_{m=1}^{\infty}
\frac{\mu\left(m\right)}{m}
\frac{\rho_n\left(m\right)}{\phi\left(m\right)}=&
\prod_{p}
\left(1+\frac{\mu\left(p\right)}{p}\frac{\rho_n\left(p\right)}{\phi\left(p\right)}
\right) \\
=&\prod_{p}\left(1-\frac{\gcd\left(p-1,n\right)}{p\left(p-1\right)}\right)
\\
=& n \p_n,
\end{align}
where the second equality follows from Lemma~\ref{rho}.
\end{proof}
We may now combine Lemma~\ref{psi}, Lemma~\ref{range}, and Lemma~\ref{asympt} to get that for any fixed $n \in \N$ and $A>0$ one has 
\begin{equation}\label{nine}
\Psi_n\left(x\right)=n \p_n x+O_{n,A}\left(\frac{x}{\left(\log x\right)^{A-5}}\right),
\end{equation}
for all $x \geq 2$.
\begin{proof}[Proof of Theorem~\ref{main}~~\ref{111}]

Using \eqref{nine} and the first part of Lemma~\ref{gyros} with 
\begin{equation}
b_k = \Lambda(k)\dfrac{\phi(k^n-1)}{k^n-1}
\end{equation}
we get
\begin{equation}\label{tena}
\sum_{p \leq x}\frac{\phi(p^n-1)}{p^n-1} \log p=n \p_n x+O_{n,A}\left(\frac{x}{(\log x)^{A-5}}\right).
\end{equation}
Inserting \eqref{tena} into the second part of Lemma~\ref{gyros} with $b_k=\frac{\phi(k^n-1)}{k^n-1}$ gives
\begin{equation}\label{ten}
\sum_{p \leq x}\frac{\phi(p^n-1)}{p^n-1} =n \p_n \li(x)+O_{n,A}\left(\frac{x}{(\log x)^{A-5}}\right).
\end{equation}
Using the first part of Lemma~\ref{gyros} with $b_k=n \p_n(k)$ gives
\begin{equation}\label{tent}
\sum_{q \leq x}\p_n(q)=\frac{1}{n}\sum_{p \leq x}\frac{\phi(p^n-1)}{p^n-1} +O_{n}\left(\frac{\sqrt{x}}{\log x}\right),
\end{equation}
which, when combined with \eqref{ten}, yields
\begin{equation}\label{ten22}
\sum_{q \leq x}\p_n(q)=\p_n\li(x)+O_{n,A}\left(\frac{x}{(\log x)^{A-5}}\right).
\end{equation}
Recall that $Q(x)$ denotes the number of prime powers that are at most $x$. To finish the proof it suffices to notice that by the Prime Number Theorem in the form 
\begin{equation}
\pi(x)=\li(x)+O_A\left(\frac{x}{(\log x)^{A}}\right)
\end{equation}
and the first part of Lemma~\ref{gyros} with $b_k=1$, we have
\begin{equation}\label{xm}
Q(x)		=	\pi(x)	+	O\left(\frac{\sqrt{x}}{\log x}\right),
\end{equation}
which implies that
\begin{equation}
Q(x)		=	\li(x)		+	O_A\left(\frac{x}{(\log x)^{A}}\right).
\end{equation}
Inserting this in \eqref{ten22} and using $\li(x)\gg \frac{x}{\log x}$, valid for all $x \geq 3$, yields 
\begin{equation}\label{last}
\frac{1}{Q(x)}	\sum_{q\leq x}	\p_n(q)	=	\p_n	+	O_{n,A}\left(	\frac{1}{(\log x)^{A-6}}		\right),
\end{equation}
for all $A>0$ and $x \geq 3$. 
Since once is allowed to use any positive value for $A$,
we can use $A+6$ instead. This may increase the dependence of the implied constant on $A$
but doesn't affect the validity of Theorem~\ref{main}.
We therefore conclude that for any positive $A$, the error term is
$O_{n,A}\left(	\frac{1}{(\log x)^{A}}		\right)$,
thus concluding the proof.
\end{proof}
The error term in \eqref{last} can be improved conditionally on the Generalised Riemann Hypothesis. Indeed, if one is to assume GRH for all $L$--functions of any modulus, then one can obtain
\begin{equation}
\psi(x;m,a)=\frac{x}{\phi(m)}+O\left(\sqrt{x}(\log x)^2\right),
\end{equation}
with an absolute implied constant,
for all $ m\geq 1$, $x\geq 3$, as shown in~\cite[Corollary 13.8]{mova}. Using this in place of Theorem~\ref{Bombieri--Vinogradov} one can follow the steps in the proof of Theorem~\ref{main} to prove
\begin{equation}
\frac{1}{Q(x)}	\sum_{q\leq x}	\p_n(q)	=	\p_n	+	O_n\left(	\frac{(\log x)^{\tau(n)+2}}{\sqrt{x}}		\right),
\end{equation}
for all $x\geq 3$.

The following lemma essentially contains the proof of both part~\ref{112}~and~\ref{113} of Theorem~\ref{main}.
It is proved via introducing multiplicative indices in the sum and then applying Lemma~\ref{nerdos}.
\begin{lemma}\label{phi}
For all naturals $r \geq 1$ and for all $x \in \R_{>1}$ one has 
\begin{equation}
\sum_{n \leq x}\frac{\phi(p^{rn}-1)}{p^{rn}-1}=\p(p,r)x+O\left(\log\left(x r \log p\right)\right).
\end{equation}
\end{lemma}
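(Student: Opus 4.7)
The plan is to apply Möbius inversion to $\phi(p^{rn}-1)/(p^{rn}-1)$, swap the order of summation with the $n$-sum, and extract $\p(p,r)x$ using the bounds collected in Lemma~\ref{nerdos}. Invoking identity~\eqref{penguin} and the fact that, for $m$ coprime to $p$, one has $m\mid p^{rn}-1\iff\ell_p(m)\mid rn$, one obtains
\begin{equation}
\sum_{n\leq x}\frac{\phi(p^{rn}-1)}{p^{rn}-1}=\Osum_m\frac{\mu(m)}{m}\bigl|\{n\leq x:\ell_p(m)\mid rn\}\bigr|.
\end{equation}
Writing $L(m):=\ell_p(m)/\gcd(\ell_p(m),r)$, the divisibility $\ell_p(m)\mid rn$ is equivalent to $L(m)\mid n$, so the inner count is $\lfloor x/L(m)\rfloor$.

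Next, splitting $\lfloor x/L(m)\rfloor=x/L(m)-\{x/L(m)\}$ isolates the main term $x\,\Osum_m\mu(m)/(mL(m))$, which by definition~\eqref{ff} equals $x\,\p(p,r)$; this identification is permitted by the absolute convergence established in Lemma~\ref{nerdos}\ref{443}. It then remains to bound
\begin{equation}
\mathcal{E}(x):=\Osum_m\frac{\mu(m)}{m}\{x/L(m)\}\ll\log(xr\log p).
\end{equation}

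I would estimate $\mathcal{E}(x)$ by splitting according to the two regimes $L(m)\leq x$ and $L(m)>x$. When $L(m)\leq x$, the trivial bound $\{x/L(m)\}\leq 1$ applies, and the constraint $L(m)\leq x$ forces $\ell_p(m)\leq x\gcd(\ell_p(m),r)\leq xr$; the contribution is thus at most $\Osum_{\ell_p(m)\leq xr}1/m$, which Lemma~\ref{nerdos}\ref{441} taken with $d=1$ estimates as $\ll\log(xr\log p)$. When $L(m)>x$, the fractional part equals $x\gcd(\ell_p(m),r)/\ell_p(m)$, and decomposing by $d:=\gcd(\ell_p(m),r)\mid r$ and writing $\ell_p(m)=d\kappa$ reduces this contribution to
\begin{equation}
x\sum_{d\mid r}\sum_{\kappa>x}\frac{1}{\kappa}\Osum_{\ell_p(m)=d\kappa}\frac{1}{m},
\end{equation}
on which Lemma~\ref{nerdos}\ref{442} can be applied term-by-term to yield $\sum_{d\mid r}\log(xd\log p)$.

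The main obstacle is coaxing this divisor-by-divisor sum into the claimed $\log(xr\log p)$ bound rather than the naive $\tau(r)\log(xr\log p)$ that results from discarding the oscillation of $\mu(m)$; the resolution is to retain the cancellation in $\mu(m)$ across the decomposition by $d$ before bounding in absolute value, paralleling and mildly sharpening the treatment of $\p(p,r)$ itself in Lemma~\ref{nerdos}\ref{443}. This cancellation step, rather than the Möbius setup or the interchange of summation, is the delicate point.
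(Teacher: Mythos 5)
Your Möbius setup, the identification of the main term $x\,\p(p,r)$ via absolute convergence, and the reduction to the sums in Lemma~\ref{nerdos} all mirror the paper and are sound. The difficulty you flag at the end is real, but your diagnosis of the remedy is off. The paper's proof does not exploit any cancellation in $\mu(m)$ in the error analysis: it bounds $|\mu(m)|\leq 1$ throughout. The $\tau(r)$ loss in your argument is caused not by discarding signs but by splitting at $L(m)=x$ and then decomposing by $d=\gcd(\ell_p(m),r)\mid r$. After that decomposition the range of $\kappa$ is only $\kappa>x$, and applying Lemma~\ref{nerdos}\ref{442} once per divisor $d$ forces the extra $\tau(r)$. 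Hoping for a cancellation in $\mu$ across the divisors $d\mid r$ would require genuinely new arithmetic input; this is not a ``mild sharpening'' of Lemma~\ref{nerdos}\ref{443}.

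The correct fix is to change the splitting threshold to $\ell_p(m)\leq rx$ versus $\ell_p(m)>rx$, which is what the paper effectively does by first grouping $m$ according to $k=\ell_p(m)$ and truncating the main sum at $k=rx$. Concretely, in your notation: for $m$ with $\ell_p(m)\leq rx$ the trivial bound $\{x/L(m)\}\leq 1$ together with Lemma~\ref{nerdos}\ref{441} (with $d=1$, $x\mapsto rx$) gives $\ll\log(xr\log p)$; for $m$ with $\ell_p(m)>rx$ one has $L(m)\geq \ell_p(m)/r>x$, so $\{x/L(m)\}=x\gcd(\ell_p(m),r)/\ell_p(m)\leq xr/\ell_p(m)$, and a single application of Lemma~\ref{nerdos}\ref{442} (with $d=1$, $x\mapsto rx$) gives
\begin{equation}
xr\sum_{k>rx}\frac{1}{k}\Osum_{\substack{m\in\N\\\ell_p(m)=k}}\frac{1}{m}\ll xr\cdot\frac{\log(rx\log p)}{rx}=\log(rx\log p),
\end{equation}
with no $d\mid r$ sum at all. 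Note that your original regions do not align with these: there exist $m$ with $L(m)>x$ but $\ell_p(m)\leq rx$ (whenever $\gcd(\ell_p(m),r)<r$), and it is precisely these $m$ that your $d\mid r$ decomposition handles inefficiently; they are instead absorbed into the ``small $\ell_p(m)$'' range where the trivial bound $\{\cdot\}\leq 1$ suffices. With this correction your argument closes, and it is then essentially the paper's proof rephrased in terms of the fractional-part identity rather than $[t]=t+O(1)$.
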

\begin{proof}
In view of~\eqref{penguin} we can write
\begin{equation}
\sum_{n \leq x}\frac{\phi(p^{rn}-1)}{p^{rn}-1} = \Osum_{m \leq p^{rx}}
\frac{\mu(m)}{m}
\sum_{\substack{n\leq x\\p^{rn}\equiv 1 \md{m}}}1.
\end{equation}
The condition 
$p^{rn}\equiv 1 \md{m}$
is equivalent to
\begin{equation}
\frac{\ell_p(m)}{\gcd\left(\ell_p(m),r\right)}\bigg|n.
\end{equation}
Grouping terms according to the value of the order, the double sum in the right-hand-side of the above equation is seen to equal
\begin{equation}
\sum_{k \leq r x}\left[\frac{x}{k}\gcd(k,r)\right]\left(\Osum_{\substack{m \in \N \\ \ell_p(m)=k}}\frac{\mu(m)}{m}\right).
\end{equation}
Using $[t]=t+O(1)$, valid for all $t \in \R_{\geq 0}$, we see that this is 
\begin{equation}
x\sum_{k \leq r x}\frac{\gcd(k,r)}{k}\left(\Osum_{\substack{m \in \N \\ \ell_p(m)=k}}\frac{\mu(m)}{m}\right)+O\left(\sum_{k \leq r x}\Osum_{\substack{m \in \N \\ \ell_p(m)=k}}\frac{1}{m}\right).
\end{equation}
Part~\ref{441} of Lemma~\ref{nerdos} implies that the error term is 
$\ll\log(rx\log p)$.
Recall the definition of $\p(p,r)$, stated in~\eqref{ff}. The
inequality $\gcd(k,r)\leq r$ implies
that the main term above equals
\begin{equation}
x \p(p,r)+O\left(xr\sum_{k>rx}\frac{1}{k}\Osum_{\substack{m \in \N \\ \ell_p(m)=k}}\frac{1}{m}\right).
\end{equation}
Using part~\ref{442} of Lemma~\ref{nerdos}
to handle the above error term
concludes our proof.
\end{proof}
\begin{proof}[Proof of Theorem~\ref{main}~~\ref{112}]
Recall the definition of $\p_n(p^r)$ in equation~\eqref{densityf}. Using Lemma~\ref{phi} yields
\begin{align}
\sum_{r \leq x}\p_n(p^r)		&=\frac{1}{n}\sum_{r \leq x}\frac{\phi(p^{rn}-1)}{p^{rn}-1}\\
						&=\frac{\p(p,n)}{n} x+O\left(\frac{\log\left(x n \log p \right)}{n}\right),
\end{align}
which proves the assertion of Theorem~\ref{main}~~\ref{112}.
\end{proof}
\begin{proof}[Proof of Theorem~\ref{main}~~\ref{113}]
Define for $x\geq 1$, $r \in \N$, and $p$ a prime
\begin{equation}
E(x,p,r)\coloneqq\sum_{n \leq x} \frac{\phi(p^{rn}-1)}{p^{rn}-1}-x\p(p,r),
\end{equation}
so that Lemma~\ref{phi} is equivalent to 
\begin{equation}\label{44}
E(x,p,r)\ll \log(xr\log p).
\end{equation}
Using partial summation one sees that for any $x \geq 1$,
\begin{equation}
\sum_{n \leq x}\p_n(p^r)=\frac{x \ \p(p,r)+E(x,p,r)}{x}+\int_1^x\frac{u \ \p(p,r)+E(u,p,r)}{u^2}\mathrm{d}u.
\end{equation}
Part~\ref{443} of Lemma~\ref{nerdos} combined with \eqref{44} shows that this equals
\begin{equation}
\p(p,r)\log x +O\left(\tau( r)\log \left(r \log p\right)\right).
\end{equation}
The proof is complete.
\end{proof}
\section{Proof of Theorem~\ref{main2}}\label{mainproof2}
We begin by stating two definitions that we will adhere to during the ensuing proofs.
\begin{definition}
A function $f:\N \to \R$ is called \textit{strongly--additive} if it satisfies
\begin{equation}
f\left(\prod_{p^{\lambda}\|n}p^\lambda\right)=\sum_{p^{\lambda}\|n}f\left(p\right)
\end{equation}
for all $n \in \N$.
\end{definition}
\begin{definition}
Let $b_n$ be a sequence of real numbers and $x,z \in \mathbb{R}$.
We define the frequencies
\[\nu_x\left(
p;b_p \leq z
\right):=\frac{|\{p\leq x: p \ \text{is prime}, b_p\leq z\}|}{\pi(x)},
\]
where $\pi(x)$ is the number of primes below $x$.
\end{definition}
Let $f$ be a strongly--additive function and let $g \in \Z[x]$. Define for each $m \in \N$
\begin{equation}
\rho_g(m)\coloneqq \left|\left\{a \in [0,m):g(a)\equiv 0 \md{m},\,\ \gcd(a,m)=1\right\} \right|
\end{equation}
and notice that this is a generalisation of the function $\rho_n$
defined at the beginning of section~\ref{lemmata}.
The next theorem can be found in \cite{tole}.
\begin{theorem}\label{toll}
Let $f$ and $g$ be as above.
Assume that $\rho_g( p) f( p)\to 0$ as $p \to \infty$ and that each of the following three series converges
\begin{equation}
\sum_{\left|f( p)\right|>1}\frac{\rho_g( p)}{p-1},
\sum_{\left|f( p)\right|\leq 1}\frac{\rho_g( p)f( p)}{p-1},
\sum_{\left|f( p)\right| \leq 1}\frac{\rho_g( p)f^2( p)}{p-1}.
\end{equation}
Then the frequencies $\nu_x \left(p;f \left( \left|g( p)\right|\right)\leq z\right)$ converge to a limiting distribution as $x \to \infty$. 
Furthermore, the said limiting distribution is continuous if and only if the series
\begin{equation}
\sum_{f( p)\neq 0}\frac{\rho_g( p)}{p-1}
\end{equation}
diverges.
\end{theorem}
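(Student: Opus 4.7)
The plan is to prove the theorem by the classical method of characteristic functions, following the pattern of the Erd\H{o}s--Wintner theorem for additive functions on the integers but localised to primes via Dirichlet's theorem in arithmetic progressions. The natural model is an independent sum $\sum_q Y_q$ indexed by primes $q$, where $Y_q$ takes the value $f(q)$ with probability $\rho_g(q)/(q-1)$ and the value $0$ otherwise; the three convergent series in the hypothesis are exactly the Kolmogorov three--series conditions for the almost--sure convergence of $\sum_q Y_q$, so the expected limit law is the distribution of this sum.

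To establish convergence, I would exploit the strong additivity of $f$ to write
\begin{equation}
\phi_x(t) \coloneqq \frac{1}{\pi(x)}\sum_{p \leq x}e^{itf(|g(p)|)} = \frac{1}{\pi(x)}\sum_{p \leq x}\prod_{q \mid g(p)} e^{itf(q)},
\end{equation}
truncate the product at a parameter $y$, and pass to the limit first in $x$, then in $y$. For primes $q \leq y$, grouping $p \leq x$ by residue class modulo $\prod_{q \leq y} q$ and invoking the Chinese Remainder Theorem together with the Prime Number Theorem for arithmetic progressions shows that the truncated characteristic function converges as $x \to \infty$ to the finite product $\prod_{q \leq y}\bigl(1 + \tfrac{\rho_g(q)}{q-1}(e^{itf(q)}-1)\bigr)$. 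The truncation error satisfies
\begin{equation}
\bigl|\phi_x(t) - \phi_x^{(y)}(t)\bigr| \leq \sum_{q > y}|e^{itf(q)}-1| \cdot \frac{|\{p \leq x : q \mid g(p)\}|}{\pi(x)},
\end{equation}
which after letting $x \to \infty$ is bounded by $\sum_{q > y}|e^{itf(q)}-1|\rho_g(q)/(q-1)$. Using $|e^{itf(q)}-1| \leq \min(2, |tf(q)|)$ together with a Taylor expansion in the range $|f(q)| \leq 1$ reduces the tail to the three hypothesised series, so the bound tends to $0$ as $y \to \infty$. Lévy's continuity theorem then yields pointwise convergence of $\phi_x(t)$ to $\phi(t) = \prod_q (1 + \rho_g(q)(e^{itf(q)}-1)/(q-1))$, establishing the existence of the limiting distribution.

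For the continuity dichotomy, I would recognise $\phi(t)$ as the characteristic function of $\sum_q Y_q$. If $\sum_{f(q)\neq 0}\rho_g(q)/(q-1) < \infty$, the first Borel--Cantelli lemma shows that only finitely many $Y_q$ are non--zero almost surely, so $\sum_q Y_q$ is a.s.\ a finite sum of simple random variables, and hence atomic. Conversely, assuming divergence, the Jessen--Wintner purity theorem reduces the problem to excluding atoms: one bounds $\sup_a P(\sum_{q>N}Y_q = a)$ via a Lévy--Kolmogorov--Rogozin concentration inequality, observing that each $Y_q$ with $f(q) \neq 0$ satisfies $1 - \sup_b P(Y_q = b) = \min(p_q, 1-p_q)$ where $p_q = \rho_g(q)/(q-1)$, and divergence of $\sum p_q$ forces $\sup_a P(\sum_{q > N} Y_q = a) \to 0$ as $N \to \infty$, which precludes any atom of the total sum.

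The main obstacle is the uniform control of the large primes in the truncation bound: the convergence $|\{p \leq x : q \mid g(p)\}|/\pi(x) \to \rho_g(q)/(q-1)$ has an error depending unpleasantly on $q$, so to justify $\limsup_{x}(\cdots) \leq \sum_{q > y}(\cdots)\rho_g(q)/(q-1)$ one must either order the limits carefully (fix $y$, let $x \to \infty$, then $y \to \infty$) or invoke a Brun-type sieve upper bound $|\{p \leq x : q \mid g(p)\}| \ll \pi(x)\rho_g(q)/(q-1)$ valid uniformly for $q \leq x^{\theta}$. A secondary subtlety is the concentration argument in the continuity direction, since Jessen--Wintner purity alone does not preclude the discrete alternative and one must bring in the quantitative Rogozin-type inequality.
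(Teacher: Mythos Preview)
The paper does not prove this theorem; it is quoted without proof from the reference of Toleuov and Fa\u{\i}nle\u{\i}b (the sentence immediately preceding the statement reads ``The next theorem can be found in \cite{tole}''), so there is no in--paper argument to compare against.

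Your outline is the standard characteristic--function route for Erd\H{o}s--Wintner--type results adapted to primes, and it is essentially correct. The three hypothesised series are precisely the Kolmogorov three--series conditions for the model sum $\sum_q Y_q$, the truncation at level $y$ together with equidistribution of primes modulo $\prod_{q\leq y} q$ gives the finite product, and the obstacle you single out---uniformity in $q$ of the count $|\{p\leq x: q\mid g(p)\}|$---is the genuine technical point, handled, as you say, by a Brun--type upper bound. One refinement worth making explicit: the convergence of the infinite product $\prod_q\bigl(1+\tfrac{\rho_g(q)}{q-1}(e^{itf(q)}-1)\bigr)$ is not obtained from absolute convergence of $\sum_q \rho_g(q)|f(q)|/(q-1)$, which is not assumed; rather, on the range $|f(q)|\leq 1$ one expands $e^{itf(q)}-1=itf(q)+O(t^2f(q)^2)$ inside the logarithm and uses the (possibly only conditionally convergent) second series together with the absolutely convergent third. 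Since $\rho_g(q)\leq\deg g$ for all but finitely many $q$, the factor $\rho_g(q)/(q-1)$ tends to $0$ and the log expansion is legitimate. Your treatment of the continuity dichotomy via Jessen--Wintner purity plus a L\'evy--Rogozin concentration bound is also the standard line.
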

\begin{proof}[Proof of Theorem~\ref{main2}~~\ref{main2i}]
We shall use Theorem~\ref{toll} to prove that for fixed $n \in \N$ the frequencies 
\begin{equation}
\nu_x\left(p;\frac{\phi(p^n-1)}{p^n-1}\leq z\right)
\end{equation}
converge to a limiting distribution as $x \to \infty$.
Define the strongly--additive function $f(k) \coloneqq \log \frac{\phi(k)}{k}$ and notice that the Taylor expansion of the logarithm 
implies that for any prime $p$, 
\begin{equation}
f( p)=\log \left(1-\frac{1}{p} \right) = - \frac{1}{p}+O\left(\frac{1}{p^2}\right).
\end{equation}
Define the polynomial $g_n(x) \coloneqq x^n-1$ and notice that by Lemma~\ref{rho}
\begin{equation}
\rho_{g_n}( p)=\gcd (p-1,n)\leq n
\end{equation}
for all primes $p$. Therefore
\begin{equation}
\rho_{g_n}( p) f( p) \ll \frac{n}{p} \to 0
\end{equation}
as $p \to \infty$.
We proceed to show that each of the three series in Theorem~\ref{toll} converge.
First, notice that $f( p) \in \left[-\log 2,0\right)$ for all primes $p$, hence the first series contains no terms. 
Regarding the second series one has
\begin{align}
\sum_{\left|f( p)\right|\leq 1}\left|\frac{\rho_{g_n}( p)}{p-1}f(p )\right|		&=	\sum_{p}\frac{\gcd(n,p-1)}{p-1}\left(\frac{1}{p}+O\left(\frac{1}{p^2}\right)\right)	\\ 
															&\ll	n \sum_{p}\frac{1}{p^2} 												\\
															&<	\infty,
\end{align}
thus the series is convergent.
Similarly
\begin{align}
\sum_{\left|f( p)\right|\leq 1}\left|\frac{\rho_{g_n}( p)}{p-1}\ f^2(p )\right|		&\ll n \sum_{p}\frac{1}{p^3} \\ 
															&< \infty,
\end{align}
and the third series converges as well. To conclude the proof of the theorem we observe that the limiting distribution is continuous due to 
\begin{align}
\sum_{\left|f( p)\right|\neq 0}\left|\frac{\rho_{g_n}( p)}{p-1}\right|			&=	\sum_{p}\frac{\gcd (n,p-1)}{p-1} 	\\
															&	\geq \sum_{p}\frac{1}{p-1} 		\\ 
															&= 	\infty.
\end{align}
Now notice that \eqref{xm}
implies that for each $x \geq 1$, $z \in \R$,
\begin{equation}
\nu_x\left(p;\frac{\phi(p^n-1)}{p^n-1}\leq z\right) = \nu_x\left(q;\frac{\phi(q^n-1)}{q^n-1}\leq z\right)+O\left(\frac{1}{\sqrt{x}}\right).
\end{equation}
Thus letting $x\to \infty$ shows that the limiting distribution of
\begin{equation}
\nu_x\left(p;\frac{\phi(p^n-1)}{p^n-1}\leq z\right)
\end{equation}
is equal to the limiting distribution of 
\begin{equation}
\nu_x\left(q;\frac{\phi(q^n-1)}{q^n-1}\leq z\right).
\end{equation}
The proof is now complete.
\end{proof}
The following theorem, stated in~\cite[Chapter III.2, Theorem 2]{tene}, provides a general criterion that ensures the existence of a limiting distribution. 
Before presenting it we need the following definition.
\begin{definition}
Let $A \subseteq \N$. The density of $A$ is defined as
\begin{equation}
\mathbf{d}(A)		\coloneqq		\lim_{x \to \infty}\frac{ \left| \left\{n \leq x: n \in A\right\} \right|}{x},
\end{equation}
provided that the limit exists, and the upper density of $A$ as
\begin{equation}
\mathbf{\overline{d}}(A)	\coloneqq		\limsup\limits_{x \to \infty}\frac{\left|\left\{n \leq x: n \in A\right\} \right|}{x}.
\end{equation}
\end{definition}
\begin{theorem}\label{tenn}
Let $f:\N\to \R$ be a function and suppose that for any $\epsilon>0$ there exists a function 
$\alpha_\epsilon(n):\N	\to \N$
having the following properties:
\begin{enumerate}
\item		\label{641}	$\lim\limits_{\epsilon \to 0} 	\limsup\limits_{T \to \infty}\mathbf{\overline{d}}\left\{n:\alpha_\epsilon(n)>T\right\}=0,$				\\
\item		\label{642}	$\lim\limits_{\epsilon \to 0}	\mathbf{\overline{d}}\left\{n:\left|f(n)-f\left(\alpha_\epsilon(n)\right)\right|>\epsilon\right\}=0$, \text{and}	\\	
\item		\label{643}	for each $\alpha \geq 1$ the density $\mathbf{d} \left\{n:\alpha_\epsilon(n)=\alpha\right\}$ exists.
\end{enumerate}
Then the frequencies $\nu_x\left(n;f(n)\leq z\right)$ converge to a limiting distribution as $x \to \infty$.
\end{theorem}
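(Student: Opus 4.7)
The plan is to establish weak convergence of the empirical distributions $\nu_x(n;f(n)\leq z)$ by approximating $f(n)$ with the finite-range function $f(\alpha_\epsilon(n))$, for which the limiting distribution can be computed explicitly, and then passing $\epsilon\to 0$. The three hypotheses have complementary roles: \ref{643} makes the distribution of $f\circ \alpha_\epsilon$ tractable since it is supported on a countable set of points whose frequencies admit individual densities; \ref{642} controls the discrepancy between $f\circ \alpha_\epsilon$ and $f$ at scale $\epsilon$; and \ref{641} supplies the tightness needed to kill the tails of $\alpha_\epsilon$ uniformly in $\epsilon$.

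For fixed $\epsilon>0$ I would partition $\N$ according to the value of $\alpha_\epsilon(n)$ to write
\begin{equation}
\nu_x(n;f(\alpha_\epsilon(n))\leq z)=\sum_{\substack{\alpha\geq 1\\ f(\alpha)\leq z}}\nu_x(n;\alpha_\epsilon(n)=\alpha).
\end{equation}
For any truncation $T$, hypothesis \ref{643} lets me pass to the limit in $x$ on the finite head of the sum, yielding $G_{\epsilon,T}(z)\coloneqq \sum_{\alpha\leq T,\,f(\alpha)\leq z}\mathbf{d}\{n:\alpha_\epsilon(n)=\alpha\}$. Hypothesis \ref{641} ensures the tail $\sum_{\alpha>T}\nu_x(n;\alpha_\epsilon(n)=\alpha)\leq \nu_x(n;\alpha_\epsilon(n)>T)$ has upper density tending to $0$ as $T\to\infty$. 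Hence $F_\epsilon(z)\coloneqq \lim_{T\to\infty}G_{\epsilon,T}(z)$ exists and $\nu_x(n;f(\alpha_\epsilon(n))\leq z)\to F_\epsilon(z)$ at continuity points of $F_\epsilon$. I would then transfer this to $f$ itself via the sandwich
\begin{equation}
\nu_x(n;f(\alpha_\epsilon(n))\leq z-\epsilon)-R_\epsilon(x)\leq \nu_x(n;f(n)\leq z)\leq \nu_x(n;f(\alpha_\epsilon(n))\leq z+\epsilon)+R_\epsilon(x),
\end{equation}
where $R_\epsilon(x)\coloneqq \nu_x(n;|f(n)-f(\alpha_\epsilon(n))|>\epsilon)$. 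Taking $\limsup_{x\to\infty}$ of both sides and invoking \ref{642} to force $\limsup_{x}R_\epsilon(x)\to 0$ as $\epsilon\to 0$, I would conclude that $\{F_\epsilon\}$ is Cauchy in the L\'evy metric, hence converges weakly to some distribution function $F$, and that $\nu_x(n;f(n)\leq z)\to F(z)$ at every continuity point of $F$. The tail behaviour of $F$ at $\pm\infty$ prescribed in the definition of a limiting distribution then follows from \ref{641} via tightness of the family $\{F_\epsilon\}$.

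The hard part will be the careful synchronisation of the three limits $x\to\infty$, $T\to\infty$, and $\epsilon\to 0$: the three residuals—the truncation tail controlled by \ref{641}, the approximation residual $R_\epsilon$ controlled by \ref{642}, and the convergence $G_{\epsilon,T}\to F_\epsilon$—must be combined so that the sandwich genuinely yields a Cauchy condition on $\{F_\epsilon\}$ and not merely a local comparison near isolated values of $z$. A clean way to organise this is a Helly selection argument extracting a weak subsequential limit $F$ from $\{F_\epsilon\}$, with uniqueness of the limit supplied by the sandwich applied at arbitrary pairs $\epsilon_1,\epsilon_2\to 0$. Once this bookkeeping is settled, the remaining verification that $F$ satisfies the prescribed boundary conditions at $\pm\infty$ is routine and reduces once more to \ref{641}.
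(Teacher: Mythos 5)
The paper does not prove Theorem~\ref{tenn}; it is stated as a direct citation of Tenenbaum's textbook (\cite[Chapter~III.2, Theorem~2]{tene}) with no in-paper argument, so there is no internal proof to compare against. Your outline correctly reproduces the standard truncation-and-sandwich strategy underlying such criteria, and the division of labour you assign to the three hypotheses, as well as the use of Helly selection to organise the triple limit, is right.

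One assertion needs repairing. When you pass from the finite head $G_{\epsilon,T}$ to the claim that $\nu_x\bigl(n;f(\alpha_\epsilon(n))\leq z\bigr)\to F_\epsilon(z)$, you invoke~\ref{641} to say that the tail $\mathbf{\overline{d}}\{n:\alpha_\epsilon(n)>T\}$ tends to $0$ as $T\to\infty$ for a fixed $\epsilon$. It need not: this quantity is non-increasing in $T$ and decreases to some $L_\epsilon\geq 0$, and~\ref{641} only guarantees $L_\epsilon\to 0$ as $\epsilon\to 0$. So $F_\epsilon$ is a priori a sub-distribution of total mass $1-L_\epsilon$, and what actually holds as $x\to\infty$ is that $\nu_x\bigl(n;f(\alpha_\epsilon(n))\leq z\bigr)$ is squeezed into $[F_\epsilon(z),\,F_\epsilon(z)+L_\epsilon]$, not that it converges to $F_\epsilon(z)$. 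This is a bookkeeping gap, not a fatal one: carry the extra term $L_\epsilon$ through the sandwich alongside $\rho_\epsilon\coloneqq\mathbf{\overline{d}}\{n:|f(n)-f(\alpha_\epsilon(n))|>\epsilon\}$, extract a weak subsequential limit $F$ of $\{F_\epsilon\}$ by Helly, and note that at any continuity point $z$ of $F$ the sandwich forces $\lim_x\nu_x(n;f(n)\leq z)=F(z)$ because $L_\epsilon+\rho_\epsilon\to 0$. Uniqueness of $F$, and hence convergence along the full family, follows since the co-countable set of continuity points determines a right-continuous function, and the boundary conditions $F(+\infty)=1$, $F(-\infty)=0$ follow from~\ref{641} and~\ref{642} by the same truncation, as you indicate. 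With that correction the proposal is sound.
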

We shall use this theorem to prove the following lemma.
\begin{lemma}\label{tenenbaum}
Fix $c \in \N$ and a prime $\eta$. Then the frequencies 
\begin{equation}
\nu_x\left(k;\frac{\phi(\eta^{ck}-1)}{\eta^{ck}-1}\leq \omega\right)
\end{equation}
converge to a limiting distribution as $x \to \infty$.
\end{lemma}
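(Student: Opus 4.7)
The plan is to apply Theorem~\ref{tenn} to $f(k):=\phi(\eta^{ck}-1)/(\eta^{ck}-1)$. First I would rewrite $f$ in a form amenable to truncation: since a prime $p\neq\eta$ divides $\eta^{ck}-1$ exactly when $\ell_\eta(p)\mid ck$, grouping primes by their multiplicative order gives
\begin{equation}
f(k) = \prod_{d\mid ck}g(d),\qquad g(d) := \prod_{\substack{p\neq\eta\\ \ell_\eta(p)=d}}\Bigl(1-\frac{1}{p}\Bigr).
\end{equation}
This identity is crucial because $|\log g(d)|\ll\Osum_{\ell_\eta(m)=d}m^{-1}$, placing all the tails to be estimated in the exact form treated by Lemma~\ref{nerdos}.

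For the approximating functions, given $\epsilon>0$ I would choose parameters $T=T_\epsilon$ and $V=V_\epsilon$ (both large, to be specified at the end) and set
\begin{equation}
\alpha_\epsilon(k) := \prod_{p\leq T}p^{\min(v_p(k),V)}.
\end{equation}
Since $\alpha_\epsilon(k)$ always divides the fixed integer $\prod_{p\leq T}p^V$, its range is a finite set and condition~\ref{641} of Theorem~\ref{tenn} is automatic. For condition~\ref{643}, the event $\{k:\alpha_\epsilon(k)=\alpha\}$ is a finite intersection over $p\leq T$ of conditions of the form $v_p(k)=a_p$ (of density $(p-1)/p^{a_p+1}$) or $v_p(k)\geq V$ (of density $p^{-V}$); these are pairwise independent by the Chinese Remainder Theorem, so the joint density exists.

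The technical heart of the argument, and where I expect the main obstacle to lie, is condition~\ref{642}. The key observation is that $v_p(c\alpha_\epsilon(k))\leq v_p(ck)$ for every prime $p$, so $c\alpha_\epsilon(k)\mid ck$; hence $f(\alpha_\epsilon(k))\geq f(k)$ and
\begin{equation}
0\leq \log\frac{f(\alpha_\epsilon(k))}{f(k)} = \sum_{\substack{d\mid ck\\ d\nmid c\alpha_\epsilon(k)}}|\log g(d)|.
\end{equation}
A prime-by-prime comparison reveals that any divisor $d$ of $ck$ failing to divide $c\alpha_\epsilon(k)$ must either have a prime factor exceeding $T$, or satisfy $p^{V+1}\mid d$ for some $p\leq T$. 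Combining $|\log g(d)|\ll\Osum_{\ell_\eta(m)=d}m^{-1}$ with the elementary bound $|\{k\leq x:d\mid ck\}|\leq x\gcd(d,c)/d+1$, the Cesaro average of the display is dominated, up to a constant depending on $c$, by
\begin{equation}
\sum_{d>T}\frac{1}{d}\Osum_{\ell_\eta(m)=d}\frac{1}{m} + \sum_{p\leq T}\frac{1}{p^{V+1}}\sum_{d'\geq 1}\frac{1}{d'}\Osum_{\ell_\eta(m)=p^{V+1}d'}\frac{1}{m}.
\end{equation}
By Lemma~\ref{nerdos}~\ref{442} these are $O(\log(T\log\eta)/T)$ and $O\bigl(\sum_{p\leq T}\log(p^{V+1}\log\eta)/p^{V+1}\bigr)$ respectively, and both tend to $0$ as $T,V\to\infty$. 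Choosing $T_\epsilon,V_\epsilon$ growing fast enough that this mean is $o(\epsilon)$, Markov's inequality together with the elementary estimate $|f(\alpha)-f(k)|\leq\log(f(\alpha)/f(k))$ (valid because $f\leq 1$) yields condition~\ref{642}, completing the application of Theorem~\ref{tenn}.
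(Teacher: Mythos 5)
Your proof is correct, and it takes a genuinely different route from the paper's. The paper chooses $\alpha_\epsilon(k)=\prod_{p^\lambda\|k,\,p\leq y}p^\lambda$ with $y=\max\{\eta,\exp(c\log\eta/\epsilon^2)\}$, truncating only in prime size; because this $\alpha_\epsilon$ is unbounded, the authors must appeal to Tenenbaum to dispose of conditions~\ref{641} and~\ref{643}. Your $\alpha_\epsilon$ additionally caps the exponent at $V$, which makes~\ref{641} trivial (finite range) and reduces~\ref{643} to an elementary residue-class count; that is a real simplification. For condition~\ref{642}, the paper bounds the discrepancy directly by $\sum_{p\mid\eta^{ck}-1,\,p>y}1/p$, estimates $\left|\{k\leq x:\ell_\eta(p)\mid ck\}\right|$ and uses $\log p<\ell_\eta(p)\log\eta$ to reduce to the Mertens-type bound $\sum_{p>y}1/(p\log p)\ll 1/\log y$ (via Lemma~\ref{pt}), then fixes $y$ so the resulting mean is $\leq\epsilon^2$. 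You instead rewrite $f$ multiplicatively over divisor classes $d\mid ck$ grouped by multiplicative order, characterise the ``bad'' divisors (those with a prime factor $>T$ or a $p^{V+1}$ factor for $p\leq T$), and control both families through Lemma~\ref{nerdos}~\ref{442}. Both arguments are sound and use the same circle of Romanoff-type estimates; yours trades a more intricate combinatorial decomposition of the Cesaro mean for cleaner bookkeeping in~\ref{641}/\ref{643} and avoids the need to pass from $\log f$ back to $f$ at the end, at the mild cost of invoking $|f(\alpha)-f(k)|\leq\log\bigl(f(\alpha)/f(k)\bigr)$, which you correctly justify from $f\leq 1$.
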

\begin{proof}
Define for $k \in \N$
\begin{equation}
f(k) \coloneqq \log \frac{\phi(\eta^{ck}-1)}{\eta^{ck}-1} = \sum_{p \mid \eta^{ck}-1}\log \left(1-\frac{1}{p} \right)
\end{equation} 
and for each fixed $\epsilon>0$
\begin{equation}
\alpha_\epsilon(k)\coloneqq\prod_{\substack{p^\lambda \| k\\p\leq y}}p^\lambda
\end{equation}
where
\begin{equation}
y\coloneqq y(\epsilon) = \max\left\{\eta,\exp\left(\frac{c\log \eta}{\epsilon^2}\right)\right\}.
\end{equation}
It is not difficult to verify that with this choice of $y$ the validity of $y>\epsilon^{-2}$ is guaranteed, so that properties~\ref{641}~and~\ref{643} of Theorem~\ref{tenn} hold as 
in~\cite[Ex. 1, p. 295]{tene}. 
In order to verify the validity of property~\ref{642}, let us begin by noticing that since $\alpha_\epsilon(k)$ is a divisor of $k$, we get by the inequality 
$\log\left(1-\frac{1}{p}\right)\ll\frac{1}{p}$, valid for each prime~$p$, that
\begin{equation}\label{guiness}
\left|f(k)-f\left(\alpha_\epsilon(k)\right)\right|	\ll	\sum_{\substack{p \mid \eta^{ck}-1 \\ p>y}}\frac{1}{p}.
\end{equation}
Using \eqref{guiness} and the fact that 
$\ell_{\eta}( p) | ck$
implies
$\ell_{\eta}( p)/\gcd(\ell_{\eta}( p),c) | k$,
we deduce that
\begin{align}
\sum_{k \leq x}\left|f(k)-f\left(\alpha_\epsilon(k)\right)\right|		&\ll		\sum_{p \in \left(y,  \eta^{c x} \right)}	\frac{1}{p}	\left|\left\{k\leq x: \ell_{\eta}( p) \mid ck\right\}\right| \\ 
													&\leq 	x \sum_{p >y}\frac{1}{p}\frac{\gcd\left(\ell_{\eta}( p),c\right)}{\ell_{\eta}( p)}.
\end{align}
In light of the inequalities
$\gcd\left(\ell_{\eta}( p),c\right)\leq c$
and $\log p< \ell_\eta( p ) \log \eta$, the last expression is seen to be at most
\begin{equation}
c x \log \eta
\sum_{p>y}\frac{1}{p \log p}.
\end{equation}
Now Lemma~\ref{pt} for $a=m=1$ combined with partial summation implies that
\begin{align}
\sum_{p>y}\frac{1}{p \log p}	&=-\frac{\log \log y + O\left( 1\right) }{\log y}	+ \int_{y}^{\infty}\frac{\log \log t + O\left( 1\right) }{t \log^2 t} \mathrm{d}t \\
						&\ll	\frac{1}{\log y}, 
\end{align}
which shows that
\begin{equation}
\sum_{k \leq x}		\left|f(k)-f\left(\alpha_\epsilon(k)\right)\right|	\ll	\frac{cx\log \eta}{\log y}.
\end{equation}
We may now use this inequality to deduce that
\begin{align}
\frac{1}{x}\left|\left\{k\leq x: \left|f(k)-f\left(\alpha_\epsilon(k)\right)\right| > \epsilon\right\}\right|	&\leq	\sum_{k \leq x}\frac{\left|f(k)-f\left(\alpha_\epsilon(k)\right)\right|}{\epsilon x}		\\
																	&\ll		\frac{c\log \eta}{\epsilon \log y}	\leq		\epsilon , 																						
\end{align}
by the definition of $y$. This establishes the validity of property~\ref{642} and therefore Theorem~\ref{tenn} applies and shows that the frequencies 
\begin{equation}
\nu_x\left(k;\frac{\phi(\eta^{ck}-1)}{\eta^{ck}-1}\leq \omega\right)
\end{equation}
converge to a limiting distribution as $x \to \infty$.
\end{proof}
Now the proof of part~\ref{main2ii} of Theorem~\ref{main2} follows
by setting
$\eta=p, c=n, k=r$ and $\omega=zn$ in Lemma~\ref{tenenbaum}.
The proof of part~\ref{main2iii} of Theorem~\ref{main2} follows
by setting
$\eta=p, c=r, k=n$ and $\omega=z$
in the same Lemma.
\begin{ack}
We thank T. Browning and R. A. Wilson for helpful comments and I. E. Shparlinski for bringing his paper~\cite{shpar} to our attention.
Both the anonymous referees have contributed substantially to improving an earlier version of this manuscript by providing us with 
numerous comments and suggestions. For this we duly thank them.
\end{ack}
\bibliographystyle{amsalpha}

\end{document}